\numberwithin{equation}{section}
\theoremstyle{plain}
\newtheorem{theorem}[equation]{Theorem}
\newtheorem{lemma}[equation]{Lemma}
\theoremstyle{definition}
\newtheorem*{acknowledgment}{Acknowledgment}
\newtheorem*{CLB}{Condition (LB)}
\newtheorem*{CLBp}{Condition (LB$'$)}
\newtheorem*{CIH}{Condition (IH)}
\newtheorem*{CIHp}{Condition (IH$'$)}
\theoremstyle{remark}
\newtheorem{remark}[equation]{Remark}
\newcommand{\supp}{\operatorname{supp}}
\newcommand{\dist}{\operatorname{dist}}
\newcommand{\diam}{\operatorname{diam}}
\newcommand{\mysection}[1]{\section{#1}
\setcounter{equation}{0}}
\renewcommand{\vec}[1]{\boldsymbol{#1}}
\newcommand{\bR}{\mathbb R}
\newcommand\Lt{{}^t\!L}
\newcommand{\LB}{\mathrm{(LB)}}
\newcommand{\IH}{\mathrm{(IH)}}
\newcommand{\ip}[1]{\left\langle#1\right\rangle}
\providecommand{\set}[1]{\{#1\}}
\providecommand{\abs}[1]{\lvert#1\rvert}
\providecommand{\Abs}[1]{\left\lvert#1\right\rvert}
\providecommand{\bigabs}[1]{\bigl\lvert#1\bigr\rvert}
\providecommand{\Biggabs}[1]{\Biggl\lvert#1\Biggr\rvert}
\providecommand{\norm}[1]{\lVert#1\rVert}
\renewcommand{\epsilon}{\varepsilon}
\renewcommand{\qedsymbol}{$\blacksquare$}
\begin{document}
\title[Neumann functions]{Neumann functions for second order elliptic systems with measurable coefficients}

\author[J. Choi]{Jongkeun Choi}
\address[J. Choi]{Department of Mathematics, Yonsei University, Seoul 120-749, Republic of Korea}
\email{cjg@yonsei.ac.kr}

\author[S. Kim]{Seick Kim}
\address[S. Kim]{Department of Mathematics, Yonsei University, Seoul 120-749, Republic of Korea}
\curraddr{Department of Computational Science and Engineering, Yonsei University, Seoul 120-749, Republic of Korea}
\email{kimseick@yonsei.ac.kr}

\subjclass[2010]{Primary 35J08, 35J47, 35J57}
\keywords{Neumann function; Green's function; Neumann boundary problem; second-order elliptic system; measurable coefficients}

\begin{abstract}
We study Neumann functions for divergence form, second order elliptic systems with bounded measurable coefficients in a bounded Lipschitz domain or a Lipschitz graph domain.
We establish existence, uniqueness, and various estimates for the Neumann functions under the assumption that weak solutions of the system enjoy interior H\"older continuity.
Also, we establish global pointwise bounds for the Neumann functions under the assumption that weak solutions of the system satisfy a certain natural local boundedness estimate.
Moreover, we prove that such a local boundedness estimate for weak solutions of the system is in fact equivalent to the global pointwise bound for the Neumann function.
We present a unified approach valid for both the scalar and the vectorial cases.
\end{abstract}

\maketitle

\mysection{Introduction}	
 
In this article, we are concerned with Neumann functions (or sometimes called Neumann Green's function) for divergence form, second order elliptic systems with bounded measurable coefficients in a bounded Lipschitz domains or a Lipschitz graph domain.
More precisely, we consider Neumann functions for the $m\times m$ elliptic systems
\begin{equation}    \label{eq0.0}
\sum_{j=1}^m L_{ij} u^j:=-\sum_{j=1}^m\sum_{\alpha,\beta=1}^d
D_\alpha(A^{\alpha\beta}_{ij}(x) D_\beta u^j), \quad i=1,\ldots,m
\end{equation}
in $\Omega$, where $\Omega$ is a bounded Lipschitz domain or a Lipschitz graph domain in $\bR^d$ with $d\ge 3$.
Here, we assume that the coefficients are measurable functions defined in the whole space $\bR^d$ satisfying the strong ellipticity and the uniform boundedness condition; see Section~\ref{pre} for their precise definitions.
We do not assume that the coefficients of the system \eqref{eq0.0} are symmetric.
We will later impose some further assumptions on the system \eqref{eq0.0} in the case when $m>1$ but not explicitly on its coefficients.

Analogous to the role of Green's functions in the study of Dirichlet boundary value problem of elliptic equations, Neumann functions play a significant role in the study of Neumann boundary value problem.
By this reason, the Neumann functions are discussed in many papers, but however, with only a few exceptions, it is assumed that the coefficients and the domains are sufficiently regular.
In the case when $m=1$, Kenig and Pipher \cite{KP93} constructed Neumann functions for the divergence form elliptic equations with $L^\infty$ coefficients and derived various estimates for the Neumann functions in the unit ball $B$.
Those estimates are the same sorts of estimates known for the Green's functions as appear in \cite{GW, LSW} and are nicely summarized in \cite[Theorem~1.6.3]{Kenig94}.
Their methods of proof are general enough to allow $B$ to be a bounded star-like Lipschitz domain but however, as pointed out in \cite{Shen04}, it is not immediately clear whether they also work for general bounded Lipschitz domains.
Also, their methods do not seem to work for unbounded domains such as the half space.
On the other hand, Hofmann and Kim \cite{HK07} recently proved existence and various (interior) estimates for the Green's function of the system \eqref{eq0.0} in arbitrary domains under the assumption that weak solutions of the system \eqref{eq0.0} satisfy an interior H\"older continuity estimate.
Their result has been complemented by a very recent article by Kang and Kim \cite{KK10}, where global estimates of Green's functions for the system \eqref{eq0.0} are established under some other (but similar) assumptions.
In the case when $m=1$, the De Giorgi-Moser-Nash theory for weak solutions implies such estimates and thus, in particular, they were able to reproduce the related classical results of \cite{GW, LSW}.

The goal of this article is to present a unified approach for the construction and estimates of Neumann functions of the elliptic systems \eqref{eq0.0} in a bounded Lipschitz domain as well as in an unbounded domain above a Lipschitz graph.
As a matter of fact, it is exactly where the strength of our paper lies.
By using our unified method, we reproduce the estimates for Neumann functions of scalar equations with $L^\infty$ coefficients in the unit ball presented in \cite{KP93} as well as those for systems with $C^\alpha$ coefficients in $C^{1,\alpha}$ domains appearing in a recent article \cite{KLS}.
Recently, there have been some interest in studying boundary value problems for divergence form elliptic equations with complex $L^\infty$ coefficients above a Lipschitz graph; see e.g., \cite{AAAHK, AA11, AAH}.
In this context, it is natural to consider Green's functions and Neumann functions for elliptic systems with $L^\infty$ coefficients in a Lipschitz graph domain.
In fact, properties of Green's function investigated in \cite{HK07}, for elliptic equations whose coefficients are complex perturbations of real $L^\infty$ coefficients, were used in \cite{AAAHK}.
However, we are not even able to find a literature dealing with Neumann functions in the half space for scalar elliptic equations with $L^\infty$ coefficients.
As we have already mentioned, our method also goes through in that case, and in particular, we derive the estimates for the Neumann function of the scalar elliptic equations with $L^\infty$ coefficients in a Lipschitz graph domain that corresponds to the estimates in \cite[Theorem~1.6.3]{Kenig94}.
We hope this article may serve as a reference for the Neumann functions and their properties, so that it may become a useful tool for other authors.

We shall now describe our main result briefly.
Let $\Omega\subset \bR^d$ with $d\ge 3$ be a bounded Lipschitz domain or a Lipschitz graph domain.
We first construct the Neumann function of the system \eqref{eq0.0} under the assumption that its weak solutions are locally H\"older continuous.
In doing so, we also derive various interior estimates for the Neumann function; see Theorem~\ref{thm1} and \ref{thm4}.
We then show that if the system \eqref{eq0.0} has such a property that  weak solutions of Neumann problems with nice data are locally bounded and satisfy a certain natural estimate (see the conditions \eqref{LB} and \eqref{LBp} in Section~\ref{main} and \ref{lgd}), then its Neumann function $\vec N(x,y)$ has the following global pointwise bound; see Theorem~\ref{thm2} and \ref{thm5}.
\begin{equation}					\label{eq0.1a}
\abs{\vec N(x,y)} \le C \abs{x-y}^{2-d},\quad \forall x,y \in\Omega,\quad x\ne y.
\end{equation}
Conversely, if the Neumann function has the above pointwise bound, then we prove that the system should satisfy the aforementioned local boundedness property; see Theorem~\ref{thm3} and \ref{thm5}.
An immediate consequence of our results combined with the celebrated De Giorgi-Moser-Nash theory would be that the Neumann function of scalar elliptic equations (i.e., $m=1$) enjoy the pointwise estimate \eqref{eq0.1a} if $\Omega$ is a bounded Lipschitz domain or a Lipschitz graph domain.
Moreover, if the coefficients of the system \eqref{eq0.0} belong to the VMO class and $\Omega$ is a bounded $C^1$ domain, then  $W^{1,p}$ estimates imply the aforementioned local boundedness property and thus, we would have the estimate \eqref{eq0.1a} in that case too.
As a matter of fact, in those cases, we also have
\begin{align*}
\abs{\vec N(x,y)-\vec N(x',y)} &\le C \abs{x-x'}^\mu \abs{x-y}^{2-d-\mu}\;\text{ if }\;x\ne y\;\text{ and }\;2\abs{x-x'}<\abs{x-y},\\
\abs{\vec N(x,y)-\vec N(x,y')} &\le C \abs{y-y'}^\mu \abs{x-y}^{2-d-\mu}\;\text{ if }\;x\ne y\;\text{ and }\;2\abs{y-y'}<\abs{x-y}
\end{align*}
for some $\mu\in (0,1]$; see Remark~\ref{rmk3.3}.

The organization of the paper is as follows.
In Section~\ref{pre}, we introduce some notation and definitions including weak formulations of Neumann problems and the precise definition of Neumann functions of the system \eqref{eq0.0}.
In Section~\ref{main}, we state our main theorems including existence and global pointwise estimates for Neumann functions in bounded Lipschitz domains, and their proofs are presented in Section~\ref{pf}.
Section~\ref{lgd} is devoted to the study of Neumann function in a Lipschitz graph domain.
In the appendix we provide the proofs of some technical lemmas.

Finally, a few remarks are in order.
This article is, in spirit, very similar to \cite{HK07, KK10}, where corresponding results for Green's functions have been established.
However, the technical details are very different since Neumann boundary condition is more difficult to handle than the Dirichlet condition.
For instance, in \cite{HK07}, the Green's functions are constructed in arbitrary domains but here Neumann functions are constructed only in domains with Lipschitz boundary. 
We do not treat the case $d=2$ in our paper.
In dimension two, the Neumann functions should have logarithmic growth and requires some other methods.
As a matter of fact, our method breaks down and is not applicable in two dimensional case.
One way to overcome this difficulty is to utilize so-called Neumann heat kernel of the elliptic operator defined in a Lipschitz cylinder $\Omega\times (0,\infty)\subset \bR^3$.
However, this approach requires first establishing a pointwise bound for Neumann heat kernel that is sharp enough to be integrable in $t$-variable;  see \cite{CDK11, DK09} for the treatment of Green's functions of elliptic systems in two dimensional domains.
This topic will be discussed elsewhere because Neumann heat kernel is an interesting subject in its own right.
After submission of the first version of this paper, Taylor et al. \cite{TKB} constructed the Green's function for the mixed problem for elliptic systems in two dimensions.

\mysection{Preliminaries}					\label{pre}
\subsection{Basic Notation}
We mainly follow the notation used in \cite{HK07, KK10}.
Let $d\ge 3$ be an integer.
We recall that a function $\varphi:\bR^{d-1}\to \bR$ is Lipschitz if there exists a constant $K <\infty$ such that 
\[
\abs{\varphi(x')-\varphi(y')} \le K \abs{x'-y'},\quad \forall x', y' \in \bR^{d-1}.
\]
A bounded domain $\Omega\subset \bR^d$ is called a Lipschitz domain if $\partial\Omega$ locally is given by the graph of a Lipschitz function.
A domain $\Omega\subset \bR^d$ is called a Lipschitz graph domain if
\[
\Omega=\set{x=(x',x_d)\in \bR^d : x_d>\varphi(x')},
\]
where $\varphi:\bR^{d-1} \to \bR$ is a Lipschitz function.
Throughout the entire article, we let $\Omega$ be a Lipschitz domain or Lipschitz graph domain in $\bR^d$.

For $p\ge 1$ and $k$ a nonnegative integer, we denote by $W^{k,p}(\Omega)$ the usual Sobolev space.
When $\Omega$ is a Lipschitz domain, we define the space $\tilde{W}^{1,2}(\Omega)$ as the family of all functions $u\in W^{1,2}(\Omega)$ satisfying $\int_{\partial\Omega} u =0$ in the sense of trace.
We warn the reader that the space $\tilde{W}^{1,2}(\Omega)$ is different from the space $\tilde{W}_1^2(B)$ used in \cite{KP93}.
By using Rellich-Kondrachov compactness theorem, one can easily show that there is a constant $C=C(d,\Omega)$ such that
\begin{equation}
\label{eq2.2bp}
\norm{u}_{L^2(\Omega)} \le C  \norm{D u}_{L^2(\Omega)},\quad \forall u\in \tilde{W}^{1,2}(\Omega).
\end{equation}
The space $Y^{1,2}(\Omega)$ is defined as the family of all weakly differentiable functions $u\in L^{2d/(d-2)}(\Omega)$, whose weak derivatives are functions in $L^2(\Omega)$.
The space $Y^{1,2}(\Omega)$ is endowed with the norm
\[
\norm{u}_{Y^{1,2}(\Omega)}:=\norm{u}_{L^{2d/(d-2)}(\Omega)}+\norm{D u}_{L^2(\Omega)}.
\]
For a Lipschitz graph domain $\Omega$ with Lipschitz constant $K$, it is easy to show (see Appendix) that the following Sobolev inequality holds:
\begin{equation}					\label{eq5.3nh}
\norm{u}_{L^{2d/(d-2)}(\Omega)} \le C(d,K) \norm{D u}_{L^2(\Omega)},\quad \forall u\in Y^{1,2}(\Omega).
\end{equation}
We denote $\Omega_R(x)=\Omega\cap B_R(x)$ and $\varSigma_R(x)=\partial\Omega\cap B_R(x)$ for any $R>0$.
We abbreviate $\Omega_R=\Omega_R(x)$ and $\varSigma_R=\varSigma_R(x)$ if the point $x$ is well understood in the context.
We define $d_x=\dist(x,\partial\Omega)=\inf\set{\abs{x-y}:y\in\partial\Omega}$.

\subsection{Elliptic systems}
Let $L$ be an elliptic operator acting on column vector valued functions $\vec u=(u^1,\ldots,u^m)^T$ defined on a subset of $\bR^d$, in the following way:
\[
L\vec u = -D_\alpha \bigl(\vec A^{\alpha\beta}\, D_\beta \vec u\bigr),
\]
where we use the usual summation convention over repeated indices $\alpha,\beta=1,\ldots, d$, and $\vec A^{\alpha\beta}=\vec{A}^{\alpha\beta}(x)$ are $m\times m$ matrix valued functions defined on the whole space $\bR^d$ with entries $A^{\alpha\beta}_{ij}$ that satisfy the strong ellipticity condition
\begin{equation}    \label{eqP-02}
A^{\alpha\beta}_{ij}(x)\xi^j_\beta \xi^i_\alpha \ge \lambda \bigabs{\vec \xi}^2:=
\lambda \sum_{i=1}^m\sum_{\alpha=1}^d \Abs{\xi^i_\alpha}^2, \quad\forall \vec\xi \in \bR^{md},\quad\forall x\in\bR^d,
\end{equation}
for some constant $\lambda>0$ and also the uniform boundedness condition
\begin{equation}    \label{eqP-03}
\Abs{A^{\alpha\beta}_{ij}(x) \xi_\alpha^j \eta_\beta^i} \le M \bigabs{\vec \xi} \bigabs{\vec \eta},\quad \forall \vec \xi ,\vec \eta \in \bR^{md},\quad \forall x\in\bR^d,
\end{equation}
for some constant $M>0$.
Notice that the $i$-th component of the column vector $L \vec u$ coincides with $L_{ij} u^j$ in \eqref{eq0.0}.
The adjoint operator $\Lt$ is defined by
\[
\Lt \vec u = -D_\alpha ({}^t\!\vec A^{\alpha\beta} D_\beta \vec u),
\]
where ${}^t\!\vec A^{\alpha\beta}=(\vec A^{\beta\alpha})^T$; i.e., ${}^t\!A^{\alpha\beta}_{ij}=A^{\beta\alpha}_{ji}$.
\subsection{Neumann boundary value problem}			\label{sec:nbp}
We denote by $\vec A D\vec u \cdot \vec n$ the conormal derivative of $\vec u$ associated with the operator $L$; i.e., $i$-th component of $\vec A D\vec u \cdot \vec n$ is defined by
\[
(\vec A D\vec u \cdot \vec n)^i=A^{\alpha\beta}_{ij} D_\beta u^j n_\alpha,
\]
where $\vec n=(n_1,\ldots, n_d)^T$ is the outward unit normal to $\partial\Omega$.
Let $\Sigma$ be an open subset of $ \partial\Omega$ and  $\vec f  \in L^1_{loc}(\Omega)^m$ and $\vec g \in L^1_{loc}(\Sigma)^m$.
We shall say that $\vec u \in W^{1,1}_{loc}(\Omega)^m$ is a weak solution of
\[
L\vec u=\vec f \;\text{ in }\;\Omega,\quad 
\vec A D\vec u \cdot \vec n = \vec g\; \text{ on }\;\Sigma
\]
if the following identity holds:
\begin{equation}					\label{eq2.1ax}
\int_\Omega A^{\alpha\beta}_{ij} D_\beta u^j D_\alpha \phi^i -\int_\Sigma g^i \phi^i=\int_\Omega f^i \phi^i,\quad \forall \vec \phi \in C_c^\infty(\Omega \cup \Sigma)^m.
\end{equation}
Observe that \eqref{eq2.1ax} makes sense if $\vec f$ is a vector-valued measure in $\Omega$; see part ii) in the definition of Neumann function below.
We are mostly interested in the case when $\Sigma=\partial\Omega$.

\subsubsection{Neumann problem in a bounded Lipschitz domain}
Let $\Omega\subset \bR^d$ be a bounded Lipschitz domain.
Notice that the inequality \eqref{eq2.2bp} implies that $\vec H:=\tilde{W}^{1,2}(\Omega)^m$ becomes a Hilbert space with the inner product
\begin{equation}					\label{eq2.2hp}
\ip{\vec u, \vec v}_{\vec H}:=\int_{\Omega}D_\alpha u^i D_\alpha v^i.
\end{equation}
If we define the bilinear form associated to the operator $L$ as
\begin{equation}					\label{eq2.2ht}
B(\vec u, \vec v):=\int_{\Omega} A^{\alpha\beta}_{ij}D_\beta u^j D_\alpha v^i,
\end{equation}
then by \eqref{eqP-02} and \eqref{eqP-03}, the bilinear form $B$ becomes coercive and bounded on $\vec H$.
Observe that by the inequality \eqref{eq2.2bp} and the Sobolev imbedding theorem, we have 
\begin{equation}		\label{eq2.3cd}
\norm{\vec u}_{L^{2d/(d-2)}(\Omega)} \le C  \norm{D \vec u}_{L^2(\Omega)}=C\norm{\vec u}_{\vec H},\quad \forall \vec u\in \vec H=\tilde{W}^{1,2}(\Omega)^m.
\end{equation}
Let $\vec f \in L^{2d/(d+2)}(\Omega)^m$ and $\vec g \in L^2(\partial\Omega)^m$ satisfy the ``compatibility'' condition
\begin{equation}			\label{eq2.5iw}
\int_\Omega \vec f + \int_{\partial\Omega} \vec g=0.
\end{equation}
Then, by the inequality \eqref{eq2.3cd} and the trace theorem combined with \eqref{eq2.2bp}, we find that
\[
F(\vec u):=\int_\Omega \vec f\cdot \vec u+ \int_{\partial\Omega}\vec g\cdot \vec u
\]
is a bounded linear functional on $\vec H$.
Therefore, the Lax-Milgram theorem implies that there exists a unique $\vec u$ in $\vec H$ such that $B(\vec u, \vec v)=F(\vec v)$ for all $\vec v \in \vec H=\tilde{W}^{1,2}(\Omega)^m$.
Observe that any function $\vec v\in W^{1,2}(\Omega)^m$ is represented as a sum of a function in $\vec H$ and a constant vector in $\bR^m$ as follows.
\[
\vec v=\left(\vec v-\fint_{\partial\Omega} \vec v \right)+\fint_{\partial\Omega} \vec v=: \tilde{\vec v}+ \vec c.
\]
Notice that the condition \eqref{eq2.5iw} implies $F(\vec c)=0$.
Then the identity $B(\vec u,\tilde{\vec v})=F(\tilde{\vec v})$ yields
\begin{equation}			\label{eq2.6ux}
\int_\Omega A^{\alpha\beta}_{ij} D_\beta u^j D_\alpha v^i =\int_\Omega f^i v^i+\int_{\partial\Omega} g^i v^i,\quad \forall \vec v \in W^{1,2}(\Omega)^m.
\end{equation}
Therefore, we have a unique solution $\vec u$ in $\vec H=\tilde{W}^{1,2}(\Omega)^m$ of the Neumann problem
\[
\left\{
\begin{aligned}
L\vec u&=\vec f \quad\text{in }\;\Omega,\\
\vec A D\vec u \cdot \vec n&= \vec g \quad \text{on }\;\partial\Omega.
\end{aligned}
\right.
\]
provided $\vec f \in L^{2d/(d+2)}(\Omega)^m$ and $\vec g \in L^2(\partial\Omega)^m$ satisfy the compatibility condition \eqref{eq2.5iw}.

\subsubsection{Neumann problem in a Lipschitz graph domain}
Let $\Omega\subset \bR^d$ be a Lipschitz graph domain and recall the inequality \eqref{eq5.3nh}.
Similar to the bounded Lipschitz domain case,  $\vec H:=Y^{1,2}(\Omega)^m$ becomes a Hilbert space with the inner product \eqref{eq2.2hp}.
Also, the bilinear form $B$ in \eqref{eq2.2ht} is coercive and bounded in $Y^{1,2}(\Omega)^m$.
For any $\vec f \in L^{2d/(d+2)}(\Omega)^m$, the inequality \eqref{eq5.3nh} implies that
\[
F(\vec v):=\int_{\Omega} f^i v^i
\]
is a bounded linear functional on $Y^{1,2}(\Omega)^m$.
Therefore, the Lax-Milgram theorem implies that there exists a unique $\vec u$ in $Y^{1,2}(\Omega)^m$ such that $B(\vec u, \vec v)=F(\vec v)$ for all $\vec v \in Y^{1,2}(\Omega)^m$; i.e., we have
\begin{equation}					\label{eq5.4aa}
\int_\Omega A^{\alpha\beta}_{ij} D_\beta u^j D_\alpha v^i =\int_\Omega f^i v^i,\quad \forall \vec v \in Y^{1,2}(\Omega)^m.
\end{equation}
Hereafter, we shall say that $\vec u$ is a unique solution in $Y^{1,2}(\Omega)^m$ of the Neumann problem
\[
\left\{
\begin{aligned}
L\vec u&=\vec f \quad\text{in }\;\Omega,\\
\vec A D\vec u \cdot \vec n&= 0 \quad \text{on }\;\partial\Omega
\end{aligned}
\right.
\]
if $\vec u\in Y^{1,2}(\Omega)^m$ and satisfies the identity \eqref{eq5.4aa}.

\subsection{Neumann function}			
In the definitions below, $\vec N=\vec N(x,y)$ will be an $m\times m$ matrix valued function with measurable entries $N_{ij}:\Omega\times\Omega\to \overline\bR$.

\subsubsection{Neumann function in a bounded Lipschitz domain}		\label{sec:nf}
We say that $\vec N$ is a Neumann function of $L$ in a bounded Lipschitz domain $\Omega$ if it satisfies the following properties:
\begin{enumerate}[i)]
\item
$\vec N(\cdot,y) \in W^{1,1}_{loc}(\Omega)$ and $\vec N(\cdot,y) \in W^{1,2}(\Omega\setminus B_r(y))$ for all $y\in\Omega$ and $r>0$.
Moreover, $\int_{\partial\Omega} \vec N(\cdot,y)=0$ in the sense of trace.
\item
 $L\vec N(\cdot,y)=\delta_y \vec I$ in $\Omega$ and $\vec A D \vec N(\cdot,y)\cdot \vec n= -\frac{1}{\abs{\partial\Omega}} \vec I$ on $\partial\Omega$ for all $y\in\Omega$ in the  sense
\begin{equation}        \label{eq2.6r}
\int_{\Omega}A^{\alpha\beta}_{ij} D_\beta N_{jk}(\cdot,y) D_\alpha \phi^i+ \frac{1}{\abs{\partial\Omega}}\int_{\partial\Omega} \phi^k = \phi^k(y),\quad
\forall \vec \phi\in C^\infty(\overline\Omega)^m.
\end{equation}
\item
For any $\vec f=(f^1,\ldots, f^m)^T \in C_c^\infty(\Omega)^m$, the function $\vec u$ given by
\begin{equation}        \label{eq2.9x}
\vec u(x):=\int_\Omega \vec N(y,x)^T \vec f(y)\,dy
\end{equation}
is a unique solution in $\tilde{W}^{1,2}(\Omega)^m$ of the problem
\begin{equation}				\label{eq2.10yq}
\left\{
\begin{aligned}
\Lt\vec u&=\vec f \quad\text{in }\;\Omega,\\
{}^t\!\vec A D\vec u \cdot \vec n&= -\frac{1}{\abs{\partial\Omega}} \int_\Omega \vec f \quad \text{on }\;\partial\Omega.
\end{aligned}
\right.
\end{equation}
\end{enumerate}

\subsubsection{Neumann function in a Lipschitz graph domain}		\label{sec:nf2}
We say that $\vec N$ is a Neumann function of $L$ in a Lipschitz graph domain $\Omega$ if it satisfies the following properties:
\begin{enumerate}[i)]
\item
$\vec N(\cdot,y) \in W^{1,1}_{loc}(\Omega)$ and $\vec N(\cdot,y) \in Y^{1,2}(\Omega\setminus B_r(y))$ for all $y\in\Omega$ and $r>0$.
\item
 $L\vec N(\cdot,y)=\delta_y \vec I$ in $\Omega$ and $\vec A D \vec N(\cdot,y)\cdot \vec n= 0$ on $\partial\Omega$ for all $y\in\Omega$ in the  sense
\begin{equation}        \label{eq5.6r}
\int_{\Omega}A^{\alpha\beta}_{ij} D_\beta N_{jk}(\cdot,y) D_\alpha \phi^i = \phi^k(y),\quad \forall \vec \phi=(\phi^1\ldots,\phi^m)^T\in C_c^\infty(\overline\Omega)^m.
\end{equation}
\item
For any $\vec f\in C_c^\infty(\Omega)^m$, the function $\vec u$ given by \eqref{eq2.9x} is a unique solution in $Y^{1,2}(\Omega)^m$ of the problem
\begin{equation}				\label{eq5.10yq}
\left\{
\begin{aligned}
\Lt\vec u&=\vec f \quad\text{in }\;\Omega,\\
{}^t\!\vec A D\vec u \cdot \vec n&= 0\quad \text{on }\;\partial\Omega.
\end{aligned}
\right.
\end{equation}
\end{enumerate}

We point out that part iii) in the above definitions give the uniqueness of a Neumann function.
Indeed, let $\tilde{\vec N}(x,y)$ is another function satisfying the above properties.
Then by the uniqueness, we have
\[
\int_\Omega (\vec N -\tilde{\vec N})(y,x)^T \vec f(y)\,dy=0,\quad \forall \vec f \in C_c^\infty(\Omega)^m,
\]
and thus we conclude that $\vec N = \tilde{\vec N}$ a.e. in $\Omega\times\Omega$.

\mysection{Main results} \label{main}
The following ``interior H\"older continuity'' condition $\IH$ means that weak solutions of $L\vec u=0$ and $\Lt \vec u=0$ enjoy interior H\"older continuity.
In the case $m=1$, it is a consequence of the celebrated De Giorgi-Moser-Nash theorem.
If $m>1$ and $d>2$, it is not true in general, but however, if the coefficients of the system \eqref{eq0.0} belong to the class of VMO and if $\Omega$ is bounded, then it is known that the condition \eqref{IH} holds in that case; see e.g., \cite[Lemma~5.3]{HK07}.

\begin{CIH}
There exist $\mu_0\in (0,1]$ and $C_0>0$ such that for all $x\in\Omega$ and $0<R < d_x$, where $d_x=\dist(x,\partial\Omega)$, the following holds:
If $\vec u\in W^{1,2}(B_R(x))$ is a weak solution of either $L\vec u=0$ or $\Lt \vec u=0$ in $B_R=B_R(x)$, then $\vec u$ is H\"older continuous in $B_R$ with the following estimate:
\begin{equation}					\tag{IH}\label{IH}
[\vec u]_{C^{\mu_0}(B_{R/2})} \le C_0 R^{-\mu_0}\left(\fint_{B_R} \abs{\vec u}^2\right)^{1/2},
\end{equation}
where $[\vec u]_{C^{\mu_0}(B_{R/2})}$ denotes the usual H\"older seminorm.
\end{CIH}

\begin{theorem}	\label{thm1}
Let $\Omega\subset \bR^d$ ($d\ge 3$) be a bounded Lipschitz domain.
Assume the condition $\IH$.
Then there exist Neumann functions $\vec N(x,y)$ of $L$ and $\tilde{\vec N}(x,y)$ of $\Lt$ in $\Omega$.
We have $\vec N(\cdot,y),\, \tilde{\vec N}(\cdot,y) \in C^{\mu_0}_{loc}(\Omega\setminus \set{y})$ for all $y\in\Omega$ and the following the identity holds:
\begin{equation}					\label{eq3.01mq}
\tilde{\vec N}(x,y):=\vec N(y,x)^T,\quad \forall x, y \in \Omega,\;\; x\ne y.
\end{equation}
Moreover, for any $\vec f \in L^q(\Omega)^m$ with $q>d/2$ and $\vec g\in L^2(\partial\Omega)^m$ satisfying $\int_\Omega \vec f+\int_{\partial\Omega} \vec g=0$, the function $\vec u$ given by
\begin{equation}        \label{eqM1.e}
\vec u(x):=\int_\Omega \vec N(x,y) \vec f(y) \,dy+ \int_{\partial\Omega} \vec N(x,y) \vec g(y)\,d\sigma(y)
\end{equation}
is a unique solution in $\tilde{W}^{1,2}(\Omega)^m$ of the problem
\begin{equation}			\label{eq3.4rm}  
\left\{
\begin{aligned}
L\vec u&=\vec f \quad\text{in }\;\Omega,\\
\vec A D\vec u \cdot \vec n&= \vec g\quad \text{on }\;\partial\Omega.
\end{aligned}
\right.
\end{equation}
Furthermore, the following estimates hold for all $y\in\Omega$:
\begin{enumerate}[i)]
\item
$\norm{\vec N(\cdot,y)}_{L^{2d/(d-2)}(\Omega\setminus B_r(y))}+\norm{D\vec N(\cdot,y)}_{L^2(\Omega\setminus B_r(y))} \le C r^{1-d/2}\;$ for all $r \in (0,d_y)$.
\item
$\norm{\vec N(\cdot,y)}_{L^p(B_r(y))}\le C r^{2-d+d/p}\;$ for all $r\in (0,d_y)$, where $p\in [1,\frac{d}{d-2})$.
\item
$\abs{\set{x\in\Omega:\abs{\vec N(x,y)}>t}}\le C t^{-d/(d-2)}\;$ for all $t> d_y^{2-d}$.
\item
$\norm{D\vec N(\cdot,y)}_{L^p(B_r(y))}\le C r^{1-d+d/p}\;$ for all $r\in (0, d_y)$, where $p\in [1,\frac{d}{d-1})$.
\item
$\abs{\set{x\in\Omega:\abs{D_x \vec N(x,y)}>t}}\le C t^{-d/(d-1)}\;$ for all $t> d_y^{1-d}$.
\item
$\abs{\vec N(x,y)}\le C \abs{x-y}^{2-d}\;$ whenever $0<\abs{x-y}<d_y/2$.
\item
$\abs{\vec N(x,y)-\vec N(x',y)} \le C \abs{x-x'}^{\mu_0} \abs{x-y}^{2-d-\mu_0}\;$ if  $x\ne y$ and  $2\abs{x-x'}<\abs{x-y}<d_y/2$.
\end{enumerate}
In the above, $C=C(d,m,\lambda,M,\Omega,\mu_0,C_0)>0$ and $C$ depends on $p$ as well in ii) and  iv).
The estimates i) -- vii) are also valid for $\tilde{\vec N}(x,y)$.
\end{theorem}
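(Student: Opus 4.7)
The plan follows the Grüter--Widman / Hofmann--Kim scheme, adapted to the Neumann setting. For each $y\in\Omega$, $\epsilon\in(0,d_y/2)$, and $k\in\set{1,\ldots,m}$, I would introduce a mollified source and consider the Lax--Milgram solution $\vec v_{\epsilon,k}\in\tilde W^{1,2}(\Omega)^m$ of $L\vec v_{\epsilon,k}=\frac{1}{\abs{B_\epsilon(y)}}\mathbf{1}_{B_\epsilon(y)}\vec e_k$ with conormal datum $-\frac{1}{\abs{\partial\Omega}}\vec e_k$ on $\partial\Omega$, which satisfies the compatibility condition of Section~2.3.1. Setting $(\vec N_\epsilon(x,y))_{jk}:=v_{\epsilon,k}^{j}(x)$, the goal is to prove uniform-in-$\epsilon$ analogues of (i)--(vii) and extract a weak limit as $\epsilon\to 0^+$.

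The heart of the argument is a sequence of \emph{a priori} estimates on $\vec N_\epsilon$. A Caccioppoli estimate on annuli around $y$, tested with $\eta^2\vec N_\epsilon$ for a cut-off $\eta$ supported in $\Omega\setminus B_{r/2}(y)$ (allowed to reach $\partial\Omega$), combined with the Sobolev inequality \eqref{eq2.3cd} and the condition \eqref{IH} (which upgrades $L^2$ control to $L^\infty$ control on scales $\ll d_y$), yields for $\epsilon<r/4$ the bound
$\norm{\vec N_\epsilon(\cdot,y)}_{L^{2d/(d-2)}(\Omega\setminus B_r(y))}+\norm{D\vec N_\epsilon(\cdot,y)}_{L^2(\Omega\setminus B_r(y))}\le Cr^{1-d/2}$, which is (i). The weak-type bound (iii) follows from a level-set argument combining (i) with the Sobolev embedding; (ii) is obtained from (iii) by integrating the distribution function; analogous gradient-level estimates on dyadic annuli, together with a truncation trick à la Grüter--Widman, give (iv) and (v). The pointwise bound (vi) is derived by applying \eqref{IH} in $B_{\abs{x-y}/4}(x)$ to $\vec N_\epsilon(\cdot,y)$ (which solves $L\vec u=0$ there once $\epsilon<\abs{x-y}/4$) and controlling its $L^2$ mass on $B_{\abs{x-y}/2}(x)$ by (ii); (vii) then follows by one more application of \eqref{IH}, now to $\vec N_\epsilon(\cdot,y)$ on $B_{\abs{x-y}/2}(x)$, using (vi) as $L^\infty$ input.

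With the uniform estimates in hand, a diagonal weak-compactness argument in $W^{1,2}(\Omega\setminus B_{r_n}(y))$ along $r_n\downarrow 0$, together with the uniform $W^{1,1}(\Omega)$ bound coming from (iv) with $p=1$, yields a limit $\vec N(\cdot,y)\in W^{1,1}_{loc}(\Omega)\cap W^{1,2}_{loc}(\Omega\setminus\set{y})$ inheriting the mean-zero trace on $\partial\Omega$ by continuity of the trace operator. Property ii) in Section~2.4.1, i.e.~\eqref{eq2.6r}, is verified by passing to the limit in the defining equation for $\vec v_{\epsilon,k}$ using the interior continuity of $\vec\phi$. For the symmetry \eqref{eq3.01mq}, I would test the equation for $\vec N_\epsilon(\cdot,y)\vec e_k$ against $\tilde{\vec N}_{\epsilon'}(\cdot,x)\vec e_i$ and vice versa; the two compatibility boundary contributions are identical and each reduces to $\frac{1}{\abs{\partial\Omega}}\int_{\partial\Omega}(\cdot)$ applied to a mean-zero trace, hence vanish. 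Sending $\epsilon,\epsilon'\to 0$ and invoking (vii) to identify Lebesgue points yields $\vec N(y,x)^T=\tilde{\vec N}(x,y)$ whenever $x\ne y$. Finally, the representation formula \eqref{eqM1.e} and uniqueness are obtained by using $\tilde{\vec N}(\cdot,x)$ (approximated by $\tilde{\vec N}_\epsilon(\cdot,x)$) as a test function in the weak formulation of \eqref{eq3.4rm}, where the hypothesis $\int_\Omega\vec f+\int_{\partial\Omega}\vec g=0$ is precisely what kills the extra constant boundary term coming from the conormal datum of $\tilde{\vec N}_\epsilon$.

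I expect the main obstacle to be the bookkeeping of the constant compatibility datum $-\abs{\partial\Omega}^{-1}\vec I$ that replaces the easier Dirichlet zero condition used in \cite{HK07}. Every test-function manipulation must verify that this boundary contribution either cancels against a mean-zero trace or can be absorbed as a lower-order term via the Poincaré-type inequality \eqref{eq2.2bp} together with the trace theorem. Relatedly, $\vec N(\cdot,y)$ is not compactly supported away from $\partial\Omega$, so the annular Caccioppoli inequalities must admit cut-offs that reach $\partial\Omega$; the boundary energy contributed by the conormal datum is $O(\abs{\partial\Omega}^{-1})$ in $L^\infty(\partial\Omega)$ and is absorbed by the trace theorem applied to $\eta^2\vec N_\epsilon$, but this adds a nontrivial lower-order term at each step of the iteration that must be tracked carefully. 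These are the technical complications that distinguish the Neumann construction here from the Dirichlet construction in \cite{HK07}.
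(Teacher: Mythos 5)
Your overall scheme is the paper's: mollified Neumann functions via Lax--Milgram with the constant conormal datum $-\abs{\partial\Omega}^{-1}\vec e_k$, uniform-in-$\epsilon$ estimates, weak limits, symmetry by testing the two mollified kernels against each other (the boundary terms vanish because both have mean-zero trace), and the representation formula from the compatibility condition. But there is a genuine gap at the very first quantitative step. You claim (i) follows from an annular Caccioppoli estimate plus Sobolev plus \eqref{IH}. The Caccioppoli inequality with a cutoff vanishing on $B_{r/2}(y)$ only gives
\begin{equation*}
\int_{\Omega\setminus B_r(y)}\abs{D\vec N^\epsilon(\cdot,y)}^2 \le C r^{-2}\int_{B_r(y)\setminus B_{r/2}(y)}\abs{\vec N^\epsilon(\cdot,y)}^2,
\end{equation*}
so to conclude $Cr^{2-d}$ you must already know that $\vec N^\epsilon$ has size $\sim r^{2-d}$ on the annulus, uniformly in $\epsilon$; the only a priori information available at that stage is the Lax--Milgram energy bound, which degenerates like $\epsilon^{(2-d)/2}$. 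Condition \eqref{IH} upgrades $L^1$ (or $L^2$) control to $L^\infty$ control, but nothing in your plan supplies that initial $L^1/L^2$ control at the correct scale --- and since you derive (vi) only later from (ii), your chain (i)$\to$(iii)$\to$(ii)$\to\cdots\to$(vi) is circular. The paper breaks this circle with a duality step that your sketch omits: for $\vec f\in C_c^\infty(B_R(y))$ one solves the adjoint Neumann problem \eqref{eq2.10yq}, uses \eqref{IH} together with the global energy estimate to get $\norm{\vec u}_{L^\infty(B_{R/2})}\le CR^2\norm{\vec f}_{L^\infty(B_R)}$, and then the identity \eqref{eq4.3wx} yields $\norm{\vec N^\epsilon(\cdot,y)}_{L^1(B_R(y))}\le CR^2$ by duality; interior local boundedness (a consequence of \eqref{IH}) then gives the pointwise bound $\abs{\vec N^\epsilon(x,y)}\le C\abs{x-y}^{2-d}$, which is exactly the input the Caccioppoli step needs. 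In the systems setting the scalar Gr\"uter--Widman truncation devices you allude to are not available as a substitute, since only \eqref{IH} is assumed; this duality argument (taken from Hofmann--Kim) is the essential missing idea.

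Two smaller points. For (vi) you propose to control the ``$L^2$ mass'' of $\vec N^\epsilon$ on $B_{\abs{x-y}/2}(x)$ by (ii); but (ii) only reaches $p<d/(d-2)$, which excludes $p=2$ for $d\ge 4$ --- the correct input is the $L^1$ bound combined with the $L^1\to L^\infty$ local boundedness that \eqref{IH} provides. Also, in this interior part of the theorem the cutoffs around $y$ satisfy $\eta\equiv 1$ near $\partial\Omega$ (since $r<d_y$), so $\eta^2\vec N^\epsilon(\cdot,y)$ still has mean-zero trace and the boundary term cancels exactly via \eqref{eq4.2xa}; the more delicate bookkeeping of the $\abs{\partial\Omega}^{-1}$ datum that you anticipate only becomes necessary for the global estimates of Theorem~\ref{thm2}, and for the representation formula with $\vec f\in L^q$, $q>d/2$, you additionally need the H\"older continuity of $\vec u$ (from \eqref{IH}) to pass to the limit in $\int\Phi_\epsilon u^k\to u^k(x)$.
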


\begin{remark}
Observe that if $\vec f \in L^q(\Omega)^m$, where $q>d/2$, satisfies $\int_\Omega \vec f=0$, then we may take $\vec g=0$ in \eqref{eqM1.e} and conclude that
\[
\vec u(x):=\int_\Omega \vec N(x,y) \vec f(y)\,dy
\]
is a unique solution in $\tilde{W}^{1,2}(\Omega)^m$ of the problem
\[
\left\{
\begin{aligned}
L\vec u&=\vec f \quad\text{in }\;\Omega,\\
\vec A D\vec u \cdot \vec n&= 0 \quad \text{on }\;\partial\Omega.
\end{aligned}
\right.
\]
Similarly, if $\vec g\in L^2(\partial\Omega)^m$ satisfies $\int_{\partial\Omega} \vec g =0$, then we find that
\[
\vec u(x):=\int_{\partial\Omega} \vec N(x,y) \vec g(y)\,d\sigma(y)
\]
is a unique solution in $\tilde{W}^{1,2}(\Omega)^m$ of the problem
\[
\left\{
\begin{aligned}
L\vec u&= 0 \quad\text{in }\;\Omega,\\
\vec A D\vec u \cdot \vec n&= \vec g \quad \text{on }\;\partial\Omega.
\end{aligned}
\right.
\]
Also, the following estimates are easy consequences of the identity \eqref{eq3.01mq} and the estimates i) -- vii) for $\tilde{\vec N}(\cdot, x)$:
\begin{enumerate}[i)]
\em
\item
$\norm{\vec N(x,\cdot)}_{L^{2d/(d-2)}(\Omega\setminus B_r(x))}+\norm{D\vec N(x,\cdot)}_{L^2(\Omega\setminus B_r(x))} \le C r^{1-d/2}\;$ for all $r \in (0,d_x)$.
\item
$\norm{\vec N(x,\cdot)}_{L^p(B_r(x))}\le C r^{2-d+d/p}\;$ for all $r\in (0,d_x)$, where $p\in [1,\frac{d}{d-2})$.
\item
$\abs{\set{y\in\Omega:\abs{\vec N(x,y)}>t}}\le C t^{-d/(d-2)}\;$ for all $t> d_x^{2-d}$.
\item
$\norm{D\vec N(x,\cdot)}_{L^p(B_r(x))}\le C r^{1-d+d/p}\;$ for all $r\in (0, d_x)$, where $p\in [1,\frac{d}{d-1})$.
\item
$\abs{\set{y\in\Omega:\abs{D_y \vec N(x,y)}>t}}\le C t^{-d/(d-1)}\;$ for all $t> d_x^{1-d}$.
\item
$\abs{\vec N(x,y)}\le C \abs{x-y}^{2-d}\;$ whenever $0<\abs{x-y}<d_x/2$.
\item
$\abs{\vec N(x,y)-\vec N(x,y')} \le C \abs{y-y'}^{\mu_0} \abs{x-y}^{2-d-\mu_0}\;$ if $x\ne y$ and $2\abs{y-y'}<\abs{x-y}<d_x/2$.
\end{enumerate}
In particular, we have $\abs{\vec N(x,y)}\leq C \abs{x-y}^{2-d}$ whenever $0<\abs{x-y}<\frac{1}{2}\max(d_x,d_y)$.

\end{remark}

The following ``local boundedness'' condition \eqref{LB} is used to obtain global pointwise bounds for the Neumann function $\vec N(x,y)$ of $L$ in $\Omega$.
Again, in the case $m=1$, it is well known that the condition \eqref{LB} holds in bounded Lipschitz domains; see e.g., \cite{Miranda}.
In the case when $m>1$, this condition does not hold in general and requires certain restrictions on the coefficients and domains.
It can be shown, for example, that if the coefficients belong to the VMO class and the domain is bounded and has $C^1$ boundary, then the condition \eqref{LB} holds via $W^{1,p}$ estimates;
see Appendix.

\begin{CLB}
There exists a constant $C_1>0$ such that the following holds:
For any $\vec f \in C_c^\infty(\Omega)^m$ and $\vec g \in C^\infty(\partial\Omega)$ satisfying $\int_\Omega \vec f + \int_{\partial\Omega} \vec g =0$, let $\vec u \in \tilde{W}^{1,2}(\Omega)^m$ be a unique weak solution of the problem
\[
\begin{aligned}
\left\{
\begin{aligned}
L\vec u&=\vec f \;\text{ in }\;\Omega\\
\vec A D\vec u \cdot \vec n&=  \vec g \; \text{ on }\;\partial\Omega
\end{aligned}
\right. 
\qquad\;\text{ or }\;\qquad
\left\{
\begin{aligned}
\Lt\vec u&=\vec f \quad\text{in }\;\Omega\\
{}^t\!\vec A D\vec u \cdot \vec n&=   \vec g \quad \text{on }\;\;\partial\Omega.
\end{aligned}
\right.
\end{aligned}
\]
Then for all $x\in\Omega$ and $0<R < \diam(\Omega)$, we have
\begin{equation*}			\tag{LB}\label{LB}
\norm{\vec u}_{L^\infty(\Omega_{R/2}(x))} \le C_1 \left(R^{-d/2} \norm{\vec u}_{L^2(\Omega_R(x))}+ R^2 \norm{\vec f}_{L^\infty(\Omega_R(x))}+R \norm{\vec g}_{L^\infty(\Sigma_R(x))}\right).
\end{equation*}
\end{CLB}

\begin{theorem}							\label{thm2}
Let $\Omega\subset \bR^d$ ($d\ge 3$) be a bounded Lipschitz domain.
Assume the condition \eqref{IH} and let $\vec N(x,y)$ be the Neumann function of $L$ in $\Omega$ as constructed in Theorem~\ref{thm1}.
If we further assume the condition \eqref{LB}, then we have the following global pointwise bound for the Neumann function:
\begin{equation}							\label{eq2.17dc}
\abs{\vec N(x,y)} \le C \abs{x-y}^{2-d}\quad \text{for all }\, x,y\in\Omega\;\text{ with }\;x\ne y,
\end{equation}
where $C=C(d,m,\lambda, M, \Omega, C_1)$.
Moreover, for all $y\in\Omega$ and $0<r<\diam(\Omega)$, we have
\begin{enumerate}[i)]
\item
$\norm{\vec N(\cdot,y)}_{L^{2d/(d-2)}(\Omega\setminus B_r(y))}+\norm{D\vec N(\cdot,y)}_{L^2(\Omega\setminus B_r(y))} \le C r^{1-d/2}$.
\item
$\norm{\vec N(\cdot,y)}_{L^p(B_r(y))}\le C r^{2-d+d/p}\;$ for $p\in [1,\frac{d}{d-2})$.
\item
$\abs{\set{x\in\Omega:\abs{\vec N(x,y)}>t}}\le C t^{-d/(d-2)}\;$ for all $t> 0$.
\item
$\norm{D\vec N(\cdot,y)}_{L^p(B_r(y))}\le C r^{1-d+d/p}\;$ for $p\in [1,\frac{d}{d-1})$.
\item
$\abs{\set{x\in\Omega:\abs{D_x \vec N(x,y)}>t}}\le C t^{-d/(d-1)}\;$ for all $t> 0$.
\end{enumerate}
In the above, $C=C(d,m,\lambda,M,\Omega,C_1)>0$ and $C$ depends on $p$ as well in ii) and  iv).
The estimates i) -- v) are also valid for the Neumann function $\tilde{\vec N}$ of the adjoint $\Lt$.
\end{theorem}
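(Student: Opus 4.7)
My plan is to apply \eqref{LB} to the columns of $\vec N(\cdot,y)$ themselves in order to extend the interior pointwise bound of Theorem~\ref{thm1}(vi) up to the boundary; the remaining $L^p$ and weak-$L^p$ bounds will then follow from that pointwise bound together with a global energy estimate extending Theorem~\ref{thm1}(i). Fix $x\ne y$ in $\Omega$. If $|x-y|<\tfrac{1}{2}\max(d_x,d_y)$, then Theorem~\ref{thm1}(vi)---applied directly, or through the identity \eqref{eq3.01mq} to $\tilde{\vec N}$---already yields \eqref{eq2.17dc}, so I may assume $d_x,d_y\le 2|x-y|$. Set $R=|x-y|/4$, so that $B_R(x)$ and $B_R(y)$ are disjoint; on $\Omega_R(x)$, each column of $\vec N(\cdot,y)$ is a weak solution of $L\vec u=0$ with conormal datum $-|\partial\Omega|^{-1}\vec I$ on $\Sigma_R(x)$. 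Applying \eqref{LB} gives
\begin{equation*}
|\vec N(x,y)|\le C\bigl(R^{-d/2}\|\vec N(\cdot,y)\|_{L^2(\Omega_R(x))}+R\bigr).
\end{equation*}
Since $R\le\diam(\Omega)$ and $d\ge 3$, $R$ itself is bounded by a constant multiple of $R^{2-d}$, so the last term is acceptable. For the first, H\"older combined with $\Omega_R(x)\subset\Omega\setminus B_R(y)$ gives $\|\vec N(\cdot,y)\|_{L^2(\Omega_R(x))}\le CR\,\|\vec N(\cdot,y)\|_{L^{2d/(d-2)}(\Omega\setminus B_R(y))}$, which by (i) below is at most $CR^{2-d/2}$, and the pointwise bound $|\vec N(x,y)|\le CR^{2-d}\le C|x-y|^{2-d}$ follows.

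\emph{Global energy estimate (i).} This is a direct consequence of Theorem~\ref{thm1}(i) for $r<d_y$, so only the range $d_y\le r<\diam(\Omega)$ requires work. My plan is to test \eqref{eq2.6r} against $\phi^i=\eta^2(N_{ik}-c_i)$, where $\eta\in C_c^\infty(\bR^d)$ vanishes on $B_{r/2}(y)$, equals $1$ outside $B_r(y)$, and satisfies $|\nabla\eta|\le C/r$, and where $\vec c$ is chosen so that the boundary term $|\partial\Omega|^{-1}\int_{\partial\Omega}\eta^2(N_{kk}-c_k)$ can be absorbed via the trace theorem and \eqref{eq2.2bp}. Since $\eta(y)=0$, the point source drops out, and ellipticity \eqref{eqP-02} with Young's inequality yields
\begin{equation*}
\|\eta D\vec N(\cdot,y)\|_{L^2(\Omega)}^2\le Cr^{-2}\|\vec N(\cdot,y)-\vec c\|_{L^2(B_r(y)\setminus B_{r/2}(y))}^2+(\text{boundary term}).
\end{equation*}
The annular $L^2$ norm of $\vec N(\cdot,y)$ is controlled by Theorem~\ref{thm1}(vi) when $r\sim d_y$, and by dyadic iteration in $r$ otherwise, while the Sobolev embedding \eqref{eq2.3cd} applied to $\eta\vec N(\cdot,y)-\vec c$ converts the gradient bound into the $L^{2d/(d-2)}$ bound of (i).

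\emph{Estimates (ii)--(v).} With \eqref{eq2.17dc} and (i) in hand, (ii) and (iii) follow by directly integrating the pointwise bound: $\int_{B_r(y)}|x-y|^{p(2-d)}\,dx$ converges precisely for $p<d/(d-2)$ and gives (ii), while (iii) follows from the observation that $|\vec N(x,y)|>t$ forces $|x-y|<(C/t)^{1/(d-2)}$. For (iv), I would decompose $B_r(y)=\bigcup_{k\ge 0}(B_{r2^{-k}}(y)\setminus B_{r2^{-k-1}}(y))$, apply H\"older with exponent $2/p$ on each annulus, and sum the resulting geometric series using (i); convergence requires precisely $p<d/(d-1)$. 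For (v), I would split $\{|D_x\vec N(x,y)|>t\}$ into its intersection with $B_{r_t}(y)$ and its complement: the first piece has measure $\le Cr_t^d$, Chebyshev combined with (i) bounds the second by $Ct^{-2}r_t^{2-d}$, and the choice $r_t\sim t^{-1/(d-1)}$ balances them. The same argument with $L$ replaced by $\Lt$ covers $\tilde{\vec N}$, since \eqref{LB} is assumed for both operators. The main obstacle I anticipate is the global energy estimate (i) in the regime $r\ge d_y$: the $|\partial\Omega|^{-1}$ boundary term inherent in the Neumann condition is not annihilated by a cutoff localized away from $y$, and absorbing it uniformly in $y\in\Omega$ requires careful coordination between the choice of $\vec c$, the trace inequality, and the Poincar\'e-type bound \eqref{eq2.2bp}.
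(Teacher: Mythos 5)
There is a genuine gap, and it is a circularity at the heart of your argument. Your proof of the pointwise bound \eqref{eq2.17dc} at a point $x$ with $d_x,d_y\le 2\abs{x-y}$ invokes estimate (i) for the radius $R=\abs{x-y}/4$, which can be far larger than $d_y$; but Theorem~\ref{thm1} only provides (i) for $r<d_y$, and your proposed independent proof of (i) in the range $d_y\le r<\diam(\Omega)$ does not close. In the Caccioppoli step you must control $\norm{\vec N(\cdot,y)-\vec c}_{L^2(B_r(y)\setminus B_{r/2}(y))}$ on annuli that reach the boundary, where Theorem~\ref{thm1}~(vi) gives no information; the natural H\"older bound $\norm{\vec N-\vec c}_{L^2(A_r)}\le Cr\norm{\vec N-\vec c}_{L^{2d/(d-2)}(A_r)}$ feeds back exactly the quantity you are trying to estimate with no smallness gain, so the ``dyadic iteration'' you invoke has nothing to iterate on. Indeed, in the paper the logical order is the opposite of yours: (i) for all $r<\diam(\Omega)$ is \emph{deduced from} \eqref{eq2.17dc} (the annulus term and the boundary term $\abs{\partial\Omega}^{-1}\int_{\Sigma_r(y)}\abs{\vec N^\epsilon}$ in the Caccioppoli identity are both estimated by the pointwise bound, and the $L^{2d/(d-2)}$ half of (i) is obtained by integrating \eqref{eq2.17dc} directly), so you cannot also use (i) as an input to \eqref{eq2.17dc}.

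The missing idea is the duality step that lets the paper prove \eqref{eq2.17dc} with no global energy estimate at all: for $\vec f\in C_c^\infty$ supported in $\Omega_R(y)$, apply \eqref{LB} to the solution $\vec u$ of the adjoint problem \eqref{eq2.10yq} (whose boundary datum is the admissible constant $-\abs{\partial\Omega}^{-1}\int_\Omega\vec f$) to get $\norm{\vec u}_{L^\infty(\Omega_{R/2}(y))}\le CR^2\norm{\vec f}_{L^\infty}$, and hence by the identity \eqref{eq4.3wx} the bound $\norm{\vec N^\epsilon(\cdot,y)}_{L^1(\Omega_R(y))}\le CR^2$, valid up to the boundary; separately, a standard self-improvement (Giaquinta iteration) upgrades \eqref{LB} to the $L^1$-mean estimate \eqref{eq2.8r}, and combining the two gives \eqref{eq2.17dc}. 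A secondary but related defect in your write-up: you apply \eqref{LB} to the columns of $\vec N(\cdot,y)$ themselves, whereas \eqref{LB} is only assumed for solutions of the Neumann problem with $C_c^\infty(\Omega)\times C^\infty(\partial\Omega)$ data; the Neumann function carries a Dirac mass and is not such a solution. This is repairable by running the argument on the mollified functions $\vec N^\epsilon(\cdot,y)$ (solutions of \eqref{eq4.0ik}, with $\epsilon$ small enough that $\supp\Phi_\epsilon$ misses $B_R(x)$) and passing to the limit via \eqref{eq4.17ht}, which is exactly what the paper does — but the circular reliance on (i) is not repairable without introducing the duality $L^1$ bound or an equivalent substitute. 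Your derivations of (ii)--(v) from \eqref{eq2.17dc} and (i) are fine and essentially match the paper's routine.
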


\begin{remark}					\label{rmk3.3}
As we have pointed out, the condition \eqref{LB} is satisfied, for example, in the scalar case and also in the case when the system has VMO coefficients and the domain is of class $C^1$.
In fact, in those cases, we also have the following ``local H\"older continuity'' condition:
There exist constants $\mu_0\in (0,1]$ and $C_1>0$ such that for all $x\in\Omega$ and $0<R<\diam(\Omega)$, the following holds:
Let $\vec u \in W^{1,2}(\Omega_R(x))^m$ be a  weak solution of either 
\begin{align*}
L\vec u&=0 \;\text{ in }\; \Omega_R(x),\quad \vec A D\vec u \cdot \vec n=\vec g \;\text{ on }\; \Sigma_R(x)\\
\text{or}\quad \Lt \vec u&=0 \;\text{ in }\; \Omega_R(x),\quad {}^t\!\vec A D\vec u \cdot \vec n=\vec g \;\text{ on }\; \Sigma_R(x), 
\end{align*}
where $\vec g\in C^\infty(\partial\Omega)^m$, then we have
\begin{equation*}			\tag{LH}\label{LH}
R^{\mu_0} [\vec u]_{C^{\mu_0}(\Omega_{R/2}(x))} \le C_1 \left( R^{-d/2} \norm{\vec u}_{L^2(\Omega_R(x))}+ R\norm{\vec g}_{L^\infty(\Sigma_R(x))}\right).
\end{equation*}
By using \eqref{LH} and modifying the proof for the estimate vii) in Theorem~\ref{thm1} (c.f. the proof for \eqref{eq2.17dc} in Section~\ref{sec4.2}), we have the following global version of the estimate vii):
\[
\abs{\vec N(x,y)-\vec N(x',y)} \le C \abs{x-x'}^{\mu_0} \abs{x-y}^{2-d-\mu_0}\;\text{ if }\;x\ne y\;\text{ and }\;2\abs{x-x'}<\abs{x-y},
\]
where $C=C(d,m,\lambda,M,\Omega,\mu_0, C_1)>0$.
The same estimate is also valid for $\tilde{\vec N}$.
\end{remark}

Finally, the following theorem says that the converse of Theorem~\ref{thm2} is also true, and thus that condition $\LB$ is equivalent to a global bound \eqref{eq2.17dc} for the Neumann function.
\begin{theorem}         \label{thm3}
Let $\Omega\subset \bR^d$ ($d\ge 3$) be a bounded Lipschitz domain.
Assume the condition \eqref{IH} and let $\vec N(x,y)$ be the Neumann function of  $L$ in $\Omega$.
Suppose there exists a constant $C_2$ such that  we have
\begin{equation}							\label{eq2.17dd}
\abs{\vec N(x,y)} \le C_2 \abs{x-y}^{2-d},\quad \forall x,y\in \Omega,\quad x\ne y.
\end{equation}
Then the condition \eqref{LB} is satisfied in $\Omega$ with $C_1=C_1(d,m,\lambda,M, \Omega, C_2)$.
\end{theorem}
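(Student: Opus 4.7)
The plan is to apply the representation formula from Theorem~\ref{thm1}: after reducing by density to smooth $\vec f\in C_c^\infty(\Omega)^m$ and $\vec g\in C^\infty(\partial\Omega)^m$ satisfying the compatibility condition, the solution $\vec u\in\tilde{W}^{1,2}(\Omega)^m$ satisfies
\[
\vec u(x_0)=\int_\Omega\vec N(x_0,y)\vec f(y)\,dy+\int_{\partial\Omega}\vec N(x_0,y)\vec g(y)\,d\sigma(y)
\]
for every $x_0\in\Omega$, understood through the H\"older-continuous representative of $\vec u$ provided by \eqref{IH}. Fix $x_0\in\Omega_{R/2}(x)$, choose a cutoff $\eta\in C_c^\infty(B_{R/2}(x_0))$ with $\eta\equiv 1$ on $B_{R/4}(x_0)$ and $|D\eta|\le C/R$, and split $\vec u(x_0)=I_{\mathrm{near}}+I_{\mathrm{far}}$ by inserting the factors $\eta$ and $1-\eta$ into the two integrals. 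Since $\operatorname{supp}\eta\subset B_{R/2}(x_0)\subset B_R(x)$, the near part is controlled directly by the hypothesis \eqref{eq2.17dd}: the kernel $|y-x_0|^{2-d}$ integrates to $O(R^2)$ over $\Omega_{R/2}(x_0)$ and, by the Lipschitz bound on $\partial\Omega$, to $O(R)$ over $\varSigma_{R/2}(x_0)$, giving $|I_{\mathrm{near}}|\le C\bigl(R^2\|\vec f\|_{L^\infty(\Omega_R(x))}+R\|\vec g\|_{L^\infty(\varSigma_R(x))}\bigr)$.

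For $I_{\mathrm{far}}$, write $\tilde N^k_i(y):=N_{ki}(x_0,y)$, so that $\tilde{\vec N}^k$ is the $k$-th column of $\tilde{\vec N}(\cdot,x_0)$ and satisfies $\Lt\tilde{\vec N}^k=\delta_{x_0}\boldsymbol{e}_k$ in $\Omega$ with ${}^t\vec A D\tilde{\vec N}^k\cdot\vec n=-\boldsymbol{e}_k/|\partial\Omega|$ on $\partial\Omega$. Because $1-\eta$ vanishes near $x_0$ and $\tilde{\vec N}^k\in W^{1,2}(\Omega\setminus B_r(x_0))$ by Theorem~\ref{thm1}(i), the product $(1-\eta)\tilde{\vec N}^k$ lies in $W^{1,2}(\Omega)^m$ and may be used as a test function in the weak formulation of $L\vec u=\vec f$, $\vec A D\vec u\cdot\vec n=\vec g$; its right-hand side is exactly $(I_{\mathrm{far}})_k$. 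Expanding the left side by Leibniz and then invoking the defining identity \eqref{eq2.6r} for $\tilde{\vec N}^k$ with the test function $(1-\eta)\vec u$ (admissible after a density argument based on Theorem~\ref{thm1}(i); note $((1-\eta)u^k)(x_0)=0$ and $\int_{\partial\Omega}(1-\eta)u^k=-\int_{\partial\Omega}\eta u^k$ since $\vec u\in\tilde{W}^{1,2}$) yields
\[
(I_{\mathrm{far}})_k=-\int_\Omega A^{\alpha\beta}_{ij}D_\beta u^j\,\tilde N^k_i\,D_\alpha\eta+\int_\Omega A^{\alpha\beta}_{ij}D_\alpha\tilde N^k_i\,u^j\,D_\beta\eta+\frac{1}{|\partial\Omega|}\int_{\varSigma_{R/2}(x_0)}\eta\,u^k.
\]
The first two terms are now supported on the annulus $A:=B_{R/2}(x_0)\setminus B_{R/4}(x_0)$ (where $|y-x_0|\sim R$), and the third on a subset of $\varSigma_R(x)$.

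On $A$, the pointwise hypothesis \eqref{eq2.17dd} gives $\|\tilde{\vec N}^k\|_{L^\infty(A)}\le CR^{2-d}$, and a boundary Caccioppoli estimate for $\tilde{\vec N}^k$ (with the constant conormal datum $-\boldsymbol{e}_k/|\partial\Omega|$ contributing a lower-order term absorbed via $R\le\diam(\Omega)$) yields $\|D\tilde{\vec N}^k\|_{L^2(A)}\le CR^{1-d/2}$. A standard Caccioppoli estimate for the Neumann problem for $\vec u$, using Young's inequality and the trace inequality to accommodate $L^\infty$ data, produces
\[
\|D\vec u\|_{L^2(A)}\le C\bigl(R^{-1}\|\vec u\|_{L^2(\Omega_R(x))}+R^{d/2+1}\|\vec f\|_{L^\infty(\Omega_R(x))}+R^{d/2}\|\vec g\|_{L^\infty(\varSigma_R(x))}\bigr).
\]
Inserting these bounds together with $|D\eta|\le C/R$ dominates each of the first two annular terms by $C(R^{-d/2}\|\vec u\|_{L^2(\Omega_R(x))}+R^2\|\vec f\|_{L^\infty(\Omega_R(x))}+R\|\vec g\|_{L^\infty(\varSigma_R(x))})$; the boundary term is treated by a trace inequality, after which the residual $R$-powers are absorbed into the constant via $R\le\diam(\Omega)$. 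Taking the supremum over $x_0\in\Omega_{R/2}(x)$ proves \eqref{LB}. The case of the adjoint operator is identical, using \eqref{eq3.01mq} to transfer \eqref{eq2.17dd} to $\tilde{\vec N}$. The main technical difficulty is the careful tracking of $R$-powers in the Caccioppoli estimate with $L^\infty$ data, combined with the density argument justifying $(1-\eta)\vec u\in W^{1,2}(\Omega)^m$ as an admissible test function in the singular identity \eqref{eq2.6r}.
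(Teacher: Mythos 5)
Your argument is correct and is essentially the paper's own proof: after the near/far splitting, your three far-part terms together with the two near-part data terms are exactly the five terms $I_1,\dots,I_5$ of the identity \eqref{eq4.32uq}, and you estimate them with the same ingredients---the bound \eqref{eq4.26gg} for the Neumann function derived from the pointwise hypothesis via a Caccioppoli argument in \eqref{eq2.6r}, the Caccioppoli inequality \eqref{eq4.88vv} for $\vec u$ with $L^\infty$ data, the trace theorem, the $L^1$ kernel bounds coming from \eqref{eq2.17dd}, and absorption of stray powers of $R$ via $R\le\diam(\Omega)$. The only (harmless) deviations are that you test with the unmollified $\tilde{\vec N}(\cdot,x_0)$ through a density argument instead of working with $\vec N^\epsilon(\cdot,y)$ and sending $\epsilon\to 0$ as the paper does, and that you center the cutoff at $x_0$ rather than at $x$, which requires the same routine radius adjustment/covering step with which the paper also concludes.
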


\mysection{Proofs of main theorems}					\label{pf}
\subsection{Proof of Theorem~\ref{thm1}}
We closely follow the proof of \cite[Theorem~3.1]{HK07}.
Let us fix a function $\Phi \in C_c^\infty(\bR^d)$ such that $\Phi$ is supported in $B_1(0)$, $0\le \Phi \le 2$, and $\int_{\bR^d} \Phi=1$.
Let $y\in \Omega$ be fixed but arbitrary.
For $\epsilon>0$, we define
\[
\Phi_\epsilon(x)=\epsilon^{-d}\Phi((x-y)/\epsilon).
\]
Let $\vec v=\vec v_{\epsilon, y, k}$ be a unique weak solution in $\tilde{W}^{1,2}(\Omega)^m$ of the problem (see Section~\ref{sec:nbp})
\begin{equation}					\label{eq4.0ik}
\left\{
\begin{aligned}
L \vec v &= \Phi_\epsilon \vec e_k\;\text{ in }\;\Omega,\\
\vec A D\vec v \cdot \vec n &=-(1/\abs{\partial\Omega})\vec e_k \;\text{ on }\; \partial\Omega,
\end{aligned}
\right.
\end{equation}
where $\vec e_k$ is the $k$-th unit vector in $\bR^m$.
We define the ``mollified Neumann function'' $\vec N^\epsilon(\cdot,y)=(N^\epsilon_{jk}(\cdot,y))_{j,k=1}^m$ by
\begin{equation}					\label{eq4.1uu}
N^\epsilon_{jk}(\cdot,y)=v^j=v^j_{\epsilon,y,k}.
\end{equation}

Then $\vec N^\epsilon(\cdot,y)$ satisfies the following identity (see \eqref{eq2.6ux}):
\begin{equation}				\label{eq4.1tv}
\int_{\Omega}A^{\alpha\beta}_{ij} D_\beta N_{jk}^\epsilon(\cdot,y) D_\alpha \phi^i+ \frac{1}{\abs{\partial\Omega}}\int_{\partial\Omega} \phi^k = \int_ {\Omega_\epsilon(y)} \Phi_\epsilon \phi^k ,\quad
\forall \vec \phi \in W^{1,2}(\Omega)^m.
\end{equation}
By the definition of the space $\tilde{W}^{1,2}(\Omega)$, we have in particular the following identity:
\begin{equation}				\label{eq4.2xa}
\int_{\Omega}A^{\alpha\beta}_{ij} D_\beta N_{jk}^\epsilon(\cdot,y) D_\alpha \phi^i = \int_ {\Omega_\epsilon(y)} \Phi_\epsilon \phi^k, \quad \forall \vec \phi \in \tilde{W}^{1,2}(\Omega)^m.
\end{equation}
By taking $\vec \phi=\vec v$ in \eqref{eq4.2xa} and then using \eqref{eqP-02}, H\"older's inequality, and \eqref{eq2.3cd}, we get
\[
\lambda \norm{D\vec v}_{L^2(\Omega)}^2 \le \Abs{\int_{\Omega_\epsilon(y)} \Phi_\epsilon v^k }\le C \epsilon^{(2-d)/2}\norm{\vec v}_{L^{2d/(d-2)}(\Omega)} \le C \epsilon^{(2-d)/2} \norm{D\vec v}_{L^2(\Omega)}.
\]
Therefore, we have (recall $\vec v$ is the $k$-th column of $\vec N^\epsilon(\cdot,y)$)
\begin{equation}		\label{eqG-02}
\norm{D\vec N^\epsilon(\cdot,y)}_{L^2(\Omega)}\le C \epsilon^{(2-d)/2},\quad\text{where }\;C=C(d,m,\lambda,M,\Omega).
\end{equation}

Let $R\in (0,d_y)$ be arbitrary, but fixed.
Assume that $\vec f \in C_c^\infty(\Omega)^m$ is supported in $B_R=B_R(y) \subset \Omega$.
Let $\vec u$ be a unique weak solution in $\tilde{W}^{1,2}(\Omega)^m$ of the problem \eqref{eq2.10yq}.
We then have the following identity (recall ${}^t\!A^{\beta\alpha}_{ji} = A^{\alpha\beta}_{ij}$):
\begin{equation}				\label{eq4.2ym}
\int_\Omega A^{\alpha\beta}_{ij} D_\beta w^j D_\alpha u^i =\int_\Omega f^i w^i,\quad \forall \vec w \in \tilde{W}^{1,2}(\Omega)^m.
\end{equation}
Then by setting $\vec \phi=\vec u$ in \eqref{eq4.2xa} and setting $\vec w=\vec v_{\epsilon,y,k}$ in \eqref{eq4.2ym}, we get
\begin{equation}				\label{eq4.3wx}
\int_\Omega N^\epsilon_{ik}(\cdot,y)f^i = \int_ {\Omega_\epsilon(y)} \Phi_\epsilon u^k.
\end{equation}
Also, by taking $\vec w=\vec u$ in \eqref{eq4.2ym}, and using \eqref{eqP-02}, \eqref{eq2.3cd}, and H\"older's inequality, we get 
\[
\lambda \norm{D\vec u}_{L^2(\Omega)}^2 \le \int_\Omega A^{\alpha\beta}_{ij} D_\beta u^j D_\alpha u^i =\int_\Omega f^i u^i \le C \norm{\vec f}_{L^{2d/(d+2)}(\Omega)}\norm{D\vec u}_{L^2(\Omega)}.
\]
Therefore, we have the estimate
\begin{equation}					\label{eq4.4ur}
\norm{D\vec u}_{L^2(\Omega)} \le  C \norm{\vec f}_{L^{2d/(d+2)}(\Omega)}.
\end{equation}

We remark that the condition $\IH$ is equivalent to the property $\mathrm{(H)}$ in \cite[Definition~2.1]{HK07}; see \cite[Lemma~2.3 and 2.4]{HK07}.
By utilizing the condition $\IH$ and \eqref{eq4.4ur}, and following literally the same steps used in deriving \cite[Eq.~(3.15)]{HK07}, we obtain
\[
\norm{\vec u}_{L^\infty(B_{R/2})} \le C R^2 \norm{\vec f}_{L^\infty(B_R)},
\]
where $C=C(d,m,\lambda,M, \Omega,\mu_0, C_0)$.
Since $R\in(0,d_y)$ is arbitrary, we get from the above estimate and \eqref{eq4.3wx} that
\[
\Biggabs{\int_{B_R} N^\epsilon_{ik}(\cdot,y) f^i} \le C R^{2} \norm{\vec f}_{L^\infty(B_R)},\quad \forall \vec f\in C_c^\infty(B_R),\;\; \forall \epsilon \in (0,R/2),\;\;  \forall R\in(0,d_y).
\]
Therefore, by duality, we conclude that
\[
\norm{\vec N^\epsilon (\cdot,y)}_{L^1(B_R(y))}\le C R^{2},\quad \forall \epsilon \in(0,R/2),\;\;  \forall R\in(0,d_y).
\]

Now, for any $x\in\Omega$ such that $0<\abs{x-y}<d_y/2$, let us take $R:=2\abs{x-y}/3$.
Notice that if $\epsilon<R/2$, then $\vec N^\epsilon(\cdot,y)\in W^{1,2}(B_R(x))^{m^2}$ and satisfies $L\vec  N^\epsilon(\cdot,y)= 0$ in $B_R(x)$.
Then by following the same line of argument used in deriving \cite[Eq.~(3.19)]{HK07},  for any $x, y\in\Omega$ satisfying $0<\abs{x-y}<d_y/2$, we have
\begin{equation}        \label{eq4.5ac}
\abs{\vec N^\epsilon(x,y)} \le C \abs{x-y}^{2-d},\quad \forall \epsilon <\abs{x-y}/3.
\end{equation}

Next, fix any $r\in (0,d_y/2)$ and let $\vec v_\epsilon$ be the $k$-th column of $\vec N^\epsilon(\cdot,y)$, where $k=1,\ldots, m$ and $0<\epsilon<r/6$.
Let $\eta$ be a smooth function on $\bR^d$ satisfying
\begin{equation}        \label{eq4.19h}
0\le \eta\le 1,\quad \eta\equiv 1\,\text{ on }\,\bR^d\setminus B_r(y),\quad \eta\equiv 0\,\text{ on }\, B_{r/2}(y),\quad\text{and}\quad \abs{D\eta} \le 4/r.
\end{equation}
We set $\vec \phi=\eta^2\vec v_\epsilon$ in \eqref{eq4.2xa} and then use \eqref{eq4.5ac} to obtain 
 \begin{equation}       \label{eq4.24y}
\int_\Omega \eta^2 \abs{D\vec v_\epsilon}^2 \le C \int_\Omega \abs{D\eta}^2 \abs{\vec v_\epsilon}^2 \le C r^{-2} \int_{B_r(y)\setminus B_{r/2}(y)} \abs{x-y}^{2(2-d)}\,dx\le  C r^{2-d}.
\end{equation}
Therefore, by \eqref{eq2.3cd} and \eqref{eq4.24y}, we obtain
\[
\norm{\vec v_\epsilon}_{L^{2d/(d-2)}(\Omega\setminus B_r(y))} \le \norm{\eta \vec v_\epsilon}_{L^{2d/(d-2)}(\Omega)}\le C \norm{D (\eta \vec v_\epsilon)}_{L^2(\Omega)} \le C r^{(2-d)/2}
\]
provided that $0<\epsilon<r/6$.
On the other hand, if $\epsilon\ge r/6$, then \eqref{eqG-02} implies
\[
\norm{\vec v_\epsilon}_{L^{2d/(d-2)}(\Omega\setminus B_{r}(y))} \le \norm{\vec v_\epsilon}_{L^{2d/(d-2)}(\Omega)} \le C \norm{D \vec v_\epsilon}_{L^2(\Omega)} \le C r^{(2-d)/2}.
\]
By combining the above two estimates, we obtain
\begin{equation}							\label{eqG-20}
\norm{\vec N^\epsilon(\cdot,y)}_{L^{2d/(d-2)}(\Omega\setminus B_r(y))} \le C r^{(2-d)/2}, \quad \forall r \in (0,d_y/2),\quad \forall \epsilon>0.
\end{equation}
Notice from \eqref{eq4.24y} and \eqref{eq4.19h} that for $0<\epsilon<r/6$, we have
\[
\norm{D\vec N^\epsilon(\cdot,y)}_{L^2(\Omega\setminus B_r(y))}\le C r^{(2-d)/2}.
\]
In the case when $\epsilon \ge r/6$, we obtain from \eqref{eqG-02} that
\[
\norm{D\vec N^\epsilon(\cdot,y)}_{L^2(\Omega\setminus B_r(y))} \le \norm{D\vec N^\epsilon(\cdot,y)}_{L^2(\Omega)} \le C \epsilon^{(2-d)/2} \le C r^{(2-d)/2}.
\]
By combining the above two inequalities, we obtain
\begin{equation}							\label{eqG-14}
\norm{D\vec N^\epsilon(\cdot,y)}_{L^2(\Omega\setminus B_r(y))}\le C r^{(2-d)/2},\quad \forall r\in(0,d_y/2),\quad \forall \epsilon>0.
\end{equation}
From the the obvious fact that $d_y/2$ and $d_y$ are comparable to each other, we find by \eqref{eqG-20} and \eqref{eqG-14} that
\begin{equation}					\label{eq4.11bs}
\norm{\vec N^\epsilon(\cdot,y)}_{L^{2d/(d-2)}(\Omega\setminus B_r(y))} +\norm{D\vec N^\epsilon(\cdot,y)}_{L^2(\Omega\setminus B_r(y))}\le C r^{(2-d)/2},\;\; \forall r\in(0,d_y),\;\; \forall \epsilon>0.
\end{equation}
From \eqref{eq4.11bs} it follows that (see \cite[pp. 147--148]{HK07})
\begin{align}
\label{eq4.12eq}
\abs{\set{x\in\Omega:\abs{\vec N^\epsilon(x,y)}>t}}\le C t^{-d/(d-2)},\quad  \forall t> d_y^{2-d},\;\; \forall \epsilon>0,\\
\label{eq4.13tz}
\abs{\set{x\in\Omega:\abs{D_x \vec N^\epsilon(x,y)}>t}} \le C t^{-d/(d-1)},\quad \forall t> d_y^{1-d},\;\; \forall \epsilon>0.
\end{align}
It is routine to derive the following strong type estimates from the above weak type estimates \eqref{eq4.12eq} and \eqref{eq4.13tz} (see e.g. \cite[p.~148]{HK07}):
\begin{align}
\label{eq4.14mq}
\norm{\vec N^\epsilon(\cdot,y)}_{L^p(B_r(y))}\le C r^{2-d+d/p},\;\; \forall r\in (0,d_y), \;\; \forall \epsilon>0,\;\text{ for }\;1\le p <d/(d-2),\\
\label{eq4.15tw}
\norm{D\vec N^\epsilon(\cdot,y)}_{L^p(B_r(y))}\le C r^{1-d+d/p},\;\; \forall r\in (0,d_y),\;\; \forall \epsilon>0,\;\text{ for }\; 1 \le p < d/(d-1),
\end{align}
where $C=C(d,m,\lambda,M,\Omega,\mu_0, C_0, p)$.

From \eqref{eqG-14}, \eqref{eq4.14mq}, and \eqref{eq4.15tw}, it follows that that there exists a sequence $\{\epsilon_\mu\}_{\mu=1}^\infty$ tending to zero and a function $\vec N(\cdot,y)$ such that $\vec N^{\epsilon_\mu}(\cdot,y) \rightharpoonup \vec N(\cdot,y)$ weakly in $W^{1,p}(B_r(y))$ for $1<p<d/(d-1)$ and all $r\in(0,d_y)$ and also that $\vec N^{\epsilon_\mu}(\cdot,y) \rightharpoonup \vec N(\cdot,y)$ weakly in $W^{1,2}(\Omega\setminus B_r(y))$ for all $r\in(0,d_y)$; see \cite[p.~159]{HK07} for the details.
Then it is routine to check that $\vec N(\cdot,y)$ satisfies the properties i) and ii) in Section~\ref{sec:nf}, and also the estimates i) -- v) in the theorem; see \cite[Section~4.1]{HK07}.

We now turn to pointwise bound for $\vec N(x,y)$. 
For any $x\in\Omega$ such that $0<\abs{x-y}<d_y/2$, set $R:=2\abs{x-y}/3$.
Notice that \eqref{eq4.11bs} implies that $\vec N(\cdot,y)\in W^{1,2}(B_R(x))$ and satisfies $L\vec N(\cdot,y)= 0$ weakly in $B_R(x)$.
Then, by \cite[Lemma~2.4]{HK07} and the estimate ii) in the theorem, we have
\[
\abs{\vec N(x,y)} \le C R^{-d} \norm{\vec N(\cdot,y)}_{L^1(B_R(x))} \le C R^{-d} \norm{\vec N(\cdot,y)}_{L^1(B_{3R}(y))} \le C R^{2-d}\le C \abs{x-y}^{2-d}.
\]
We have thus shown that the estimate vi) in the theorem holds.
Then, it is routine to see that the estimate vii) in the theorem follows from the condition $\IH$ and the above estimate.

Next, let $x\in \Omega\setminus\set{y}$ be fixed but arbitrary, and let $\tilde{\vec N}{}^{\epsilon'}(\cdot,x)\in \tilde{W}^{1,2}(\Omega)^{m^2}$ be the mollified Neumann function of the adjoint operator $\Lt$ in $\Omega$, where $\epsilon'>0$.
By setting $\vec \phi$ in \eqref{eq4.2xa} to be the $l$-th column of $\tilde{\vec N}{}^{\epsilon'}(\cdot,x)$ and utilizing an integral identity for $\tilde{\vec N}{}^{\epsilon'}(\cdot,x)$ similar to \eqref{eq4.2xa}, we obtain the following identity:
\[
\int_ {\Omega_{\epsilon'}(x)} \Phi_{\epsilon'} N_{lk}^\epsilon(\cdot,y)= \int_ {\Omega_\epsilon(y)} \Phi_\epsilon \tilde{N}{}_{kl}^{\epsilon'}(\cdot,x).
\]
Let $\tilde{\vec N}(\cdot,x)$ be a Neumann function of $\Lt$ in $\Omega$ obtained by a sequence $\set{\epsilon_\nu'}_{\nu=1}^\infty$ tending to $0$.
Then, by following the same steps as in \cite[p.~151]{HK07}, we conclude
\[
N_{lk}(x,y)=\tilde{N}_{kl}(y,x), \quad \forall k,l=1,\ldots,m,
\]
which obviously implies the identity \eqref{eq3.01mq}.
In fact, by following a similar line of reasoning as in \cite[p.~151]{HK07}, we find
\begin{align}
\vec N^\epsilon(x,y)=\epsilon^{-d}\int_{\Omega}\Phi\left(\frac{z-y}{\epsilon}\right) \vec N(x,z)\,dz, \\
\label{eq4.17ht}
\lim_{\epsilon \to 0} \vec N^\epsilon(x,y)=\vec N(x,y), \quad \forall x,y\in\Omega,\;\; x\ne y.
\end{align}

Now, let $\vec u$ be a unique solution in $\tilde{W}^{1,2}(\Omega)^m$ of the problem \eqref{eq2.10yq} with $\vec f\in C_c^\infty(\Omega)^m$.
We remark that the condition $\IH$ implies that $\vec u$ is continuous in $\Omega$; see \cite[Eq.~(3.14)]{HK07}.
By setting $\vec w$ to be the $k$-th column of $\vec N^\epsilon(\cdot,y)$ in \eqref{eq4.2ym} and setting $\vec \phi= \vec u$ in \eqref{eq4.2xa}, we get
\[
\int_{\Omega} N_{ik}^\epsilon(\cdot,y) f^i = \int_ {\Omega_\epsilon(y)} \Phi_\epsilon u^k.
\]
We take the limit $\epsilon\to 0$ above to get
\[
u^k(y)=\int_\Omega N_{ik}(x,y) f^i(x)\,dx,
\]
which is equivalent to \eqref{eq2.9x}.
We have shown that $\vec N(x,y)$ satisfies the property iii) in Section~\ref{sec:nf}, and thus that $\vec N(x,y)$ is a unique Neumann function of the operator $L$ in $\Omega$.

Finally, let $\vec f\in L^q(\Omega)^m$ with $q>d/2$ and $\vec g\in L^2(\partial\Omega)^m$ satisfy the compatibility condition \eqref{eq2.5iw}, and let $\vec u$ be a unique weak solution in $\tilde{W}^{1,2}(\Omega)^m$ of the problem \eqref{eq3.4rm}; see Section~\ref{sec:nbp}.
Then $\vec u$ satisfies the identity \eqref{eq2.6ux}.
By setting $\vec v$ to be the $k$-th column of $\tilde{\vec N}{}^\epsilon(\cdot,x)$ in \eqref{eq2.6ux} and utilizing an integral identity for $\tilde{\vec N}{}^\epsilon(\cdot,x)$ similar to \eqref{eq4.2xa}, we get
\[
\int_\Omega \tilde{N}_{ik}^\epsilon(\cdot,x) f^i +
\int_{\partial\Omega} \tilde{N}_{ik}^\epsilon(\cdot,x) g^i = \int_ {\Omega_\epsilon(x)} \Phi_\epsilon u^k.
\]
We remark that the condition \eqref{IH} together with the assumption that $\vec f\in L^q(\Omega)^m$ with $q>d/2$ implies that $\vec u$ is H\"older continuous in $\Omega$; see e.g., \cite[Section~3.2]{HK07}.
Then by proceeding similarly as above and using \eqref{eq3.01mq}, we obtain
\[
u^k(x)=\int_\Omega N_{ki}(x,y) f^i(y)\,dy+\int_\Omega N_{ki}(x,y) g^i(y)\,d\sigma(y),
\]
which is the formula \eqref{eqM1.e}.
The proof is complete.
\hfill\qedsymbol

\subsection{Proof of Theorem~\ref{thm2}}			\label{sec4.2}
First, we shall assume that $0<\abs{x-y} \le 2/3$ and prove the bound \eqref{eq2.17dc}.
Let $0<R<1$ and $y\in \Omega$ be arbitrary, but fixed.
Assume that $\vec f \in C_c^\infty(\Omega)^m$ is supported in $\Omega_R(y)$ and let $\vec u$ be a unique weak solution in $\tilde{W}^{1,2}(\Omega)^m$ of the problem \eqref{eq2.10yq}.
Then we have the identities \eqref{eq4.2ym} and \eqref{eq4.3wx} as in the proof of Theorem~\ref{thm1}.
Also, we have the estimate \eqref{eq4.4ur}, and thus by \eqref{eq2.3cd} we get
\begin{equation}        \label{eq3.1a}
\norm{\vec u}_{L^{2d/(d-2)}(\Omega)} \le C\norm{D\vec u}_{L^2(\Omega)}\le C \norm{\vec f}_{L^{2d/(d+2)}(\Omega)}\le C  R^{(2+d)/2} \norm{\vec f}_{L^\infty(\Omega_R(y))},
\end{equation}
where $C=C(d, m, \lambda, M, \Omega)$.
Observe that $\vec g:=-(1/\abs{\partial\Omega})\int_\Omega \vec f$ has the bound
\[
\abs{\vec g} \le \frac{1}{\abs{\partial\Omega}} \int_\Omega \abs{\vec f} \le  C R^d \norm{\vec f}_{L^\infty(\Omega_R(y))},
\]
where $C=C(d,\Omega)$, and thus we have
\begin{equation}					\label{eq4.22hh}
R \norm{\vec  g}_{L^\infty(\Sigma_R(y))}  \le C R^{d+1} \norm{\vec f}_{L^\infty(\Omega_R(y))} \le C R^2 \norm{\vec f}_{L^\infty(\Omega_R(y))},
\end{equation}
where we used the assumption that $R <1$.
Then by \eqref{LB}, \eqref{eq3.1a}, \eqref{eq4.22hh}, and H\"older's inequality, we obtain
\begin{equation}    \label{eq3.2v}
\norm{\vec u}_{L^\infty(\Omega_{R/2}(y))} \le C R^2 \norm{\vec f}_{L^\infty(\Omega_R(y))},
\end{equation}
where $C=C(d,m,\lambda, M, \Omega, C_1)$.
Hence, by \eqref{eq4.3wx} and \eqref{eq3.2v}, we conclude that
\begin{equation}        \label{eq3.3y}
\Biggabs{\int_{\Omega_R(y)} \vec N^\epsilon(\cdot,y)^T \vec f \,} \le CR^2 \norm{\vec f}_{L^\infty(\Omega_R(y))},\quad \forall \vec f\in C_c^\infty(\Omega_R(y)),\;\; \forall \epsilon \in (0,R/2).
\end{equation}
Therefore, by duality, we conclude from \eqref{eq3.3y} that
\begin{equation}    \label{eq3.3w}
\norm{\vec N^\epsilon (\cdot,y)}_{L^1(\Omega_R(y))}\le C R^2,\quad \forall \epsilon \in (0,R/2),
\end{equation}
where $C=C(d,m,\lambda, M, \Omega, C_1)$.

Next, recall that the $\vec v=\vec v_{\epsilon,y,k}$ (i.e., $k$-th column of $\vec N^\epsilon(\cdot,y)$) is a unique weak solution in $\tilde{W}^{1,2}(\Omega)^m$ of the problem \eqref{eq4.0ik}.
Let $x\in\Omega$,  $r>0$, and $\epsilon>0$ be such that $B_\epsilon(y)\cap B_r(x) =\emptyset$.
Then, the condition \eqref{LB} implies that
\begin{equation}					\label{eq4.25dd}
\norm{\vec N^\epsilon(\cdot, y)}_{L^\infty(\Omega_{r/2}(x))} \le C_1 \left( r^{-d/2} \norm{\vec N^\epsilon(\cdot, y)}_{L^2(\Omega_r(x))}+ \abs{\partial\Omega}^{-1} r\right).
\end{equation}
By a standard iteration argument (see \cite[pp. 80--82]{Gi93}), we then obtain from \eqref{eq4.25dd} that
\begin{equation}    \label{eq2.8r}
\norm{\vec N^\epsilon(\cdot, y)}_{L^\infty(\Omega_{r/2}(x))}  \le C r^{-d} \norm{\vec N^\epsilon(\cdot, y)}_{L^1(\Omega_r(x))}+ C r,
\end{equation}
where $C=C(d, C_1,\abs{\partial\Omega})$.

Now, for any $x\in\Omega$ satisfying $0<\abs{x-y}\le 2/3$, take $R=3r=3\abs{x-y}/2$.
Then by \eqref{eq2.8r} and \eqref{eq3.3w}, we obtain for all $\epsilon \in (0,r)$ that
\begin{multline}					\label{eq4.28ej}
\abs{\vec N^\epsilon(x,y)} \le C r^{-d} \norm{\vec N^\epsilon(\cdot, y)}_{L^1(\Omega_r(x))} +Cr \le C r^{-d} \norm{\vec N^\epsilon(\cdot,y)}_{L^1(\Omega_{3r}(y))}+ C r\\
\le C R^{2-d} + C R \le C R^{2-d} \le C \abs{x-y}^{2-d},
\end{multline}
where $C=C(d,m,\lambda, M, \Omega, C_1)$ and we have again used the assumption that $R \le 1$.
Therefore, by using \eqref{eq4.17ht}, we may take the limit $\epsilon\to 0$ in the above inequality  and obtain \eqref{eq2.17dc} under an extra assumption that $\abs{x-y}\le 2/3$.
In the case when $\abs{x-y} >2/3$, we take $R=3r=1$ in \eqref{eq4.28ej} and get 
\[
\abs{\vec N^\epsilon(x,y)} \le C \le C\diam(\Omega)^{d-2} \abs{x-y}^{2-d} \le C \abs{x-y}^{2-d},
\]
where $C=C(d,m,\lambda, M, \Omega, C_1)$.
Again, by taking the limit $\epsilon\to 0$ in the above inequality, we obtain \eqref{eq2.17dc} even if $\abs{x-y} >2/3$.
We have thus shown that $\LB$ implies \eqref{eq2.17dc}.

To derive the estimates i) -- v) in the theorem, we need to repeat some steps in the proof of Theorem~\ref{thm1} with a little modification.
Let $\vec v_\epsilon$ be the $k$-th column of $\vec N^\epsilon(\cdot,y)$, where $k=1,\ldots, m$, $0<\epsilon<\min(d_y,r)/6$, and $0<r<\diam(\Omega)$.
Let $\eta$ be a smooth function on $\bR^d$ satisfying the conditions \eqref{eq4.19h}.
We set $\vec \phi=\eta^2\vec v_\epsilon$ in \eqref{eq4.1tv} to get
\begin{equation}					\label{eq4.34aa}
\int_{\Omega}\eta^2 A^{\alpha\beta}_{ij} D_\beta v_\epsilon^j D_\alpha v_\epsilon^i+\int_{\Omega}2 \eta A^{\alpha\beta}_{ij} D_\beta v_\epsilon^j D_\alpha \eta v_\epsilon^i+ \frac{1}{\abs{\partial\Omega}}\int_{\partial\Omega} (\eta^2-1) v_\epsilon^k = 0,
\end{equation}
where we used the fact $\int_{\partial\Omega} \vec v_\epsilon =0$ and $\eta^2 \Phi_\epsilon \equiv 0$.
We then use \eqref{eqP-02}, \eqref{eqP-03}, Cauchy's inequality to get
\[
\int_\Omega \eta^2 \abs{D \vec N^\epsilon(\cdot,y)}^2 \le C \left( \int_{\Omega} \abs{D \eta}^2 \abs{\vec N^\epsilon(\cdot,y)}^2+\frac{1}{\abs{\partial\Omega}}\int_{\partial\Omega} (1-\eta^2) \abs{\vec N^\epsilon(\cdot,y)} \right),
\]
where $C=C(\lambda, M)$.
By using the conditions in \eqref{eq4.19h} and the pointwise bound for $\vec N^\epsilon(x,y)$ obtained above, we get 
\begin{align*}
\int_{\Omega\setminus B_r(y)} \abs{D \vec N^\epsilon(\cdot,y)}^2 & \le C \left( r^{-2} \int_{B_r(y)\setminus B_{r/2}(y)} \abs{x-y}^{4-2d}\,dx+\frac{1}{\abs{\partial\Omega}}\int_{\Sigma_r(y)}  \abs{x-y}^{2-d}\,d\sigma(x) \right)\\
&\le C r^{2-d},\quad\text{where }\;C=C(d,m,\lambda, M, \Omega, C_1).
\end{align*}
Therefore, by taking the limit $\epsilon\to 0$, we get
\[
\norm{D\vec N(\cdot,y)}_{L^2(\Omega\setminus B_r(y))}\le C r^{(2-d)/2},\quad0<\forall r<\diam(\Omega).
\]
Observe that the pointwise bound \eqref{eq2.17dc} together with the above estimate yields
\begin{equation}							\label{eq4.26gg}
\norm{\vec N(\cdot,y)}_{L^{2d/(d-2)}(\Omega\setminus B_r(y))}+\norm{D\vec N(\cdot,y)}_{L^2(\Omega\setminus B_r(y))}\le C r^{(2-d)/2},\quad0<\forall r<\diam(\Omega),
\end{equation}
where $C=C(d,m,\lambda, M, \Omega, C_1)$.
By following literally the same step used in deriving \eqref{eq4.12eq} -- \eqref{eq4.15tw} from \eqref{eq4.11bs}, and using the fact that $\abs{\Omega}<\infty$, we obtain the estimates i) -- v) from \eqref{eq4.26gg}.
The proof is complete.
\hfill\qedsymbol

\subsection{Proof of Theorem~\ref{thm3}}
By the symmetry, it is enough to prove \eqref{LB} for weak solutions of  the problem
\[
\left\{
\begin{aligned}
\Lt\vec u&=\vec f \quad\text{in }\;\Omega,\\
{}^t\!\vec A D\vec u \cdot \vec n&= \vec g \quad \text{on }\;\partial\Omega,
\end{aligned}
\right.
\]
where $\vec f \in C_c^\infty(\Omega)^m$ and $\vec g \in C^\infty(\partial\Omega)^m$ are such that $\int_\Omega \vec f + \int_{\partial\Omega} \vec g =0$.

Let $\vec u$ be a unique weak solution in $\tilde{W}^{1,2}(\Omega)^m$ of the above problem.
We then have the identity (c.f. \eqref{eq2.6ux})
\begin{equation}                \label{eq4.23h}
\int_\Omega A^{\alpha\beta}_{ij} D_\beta w^j D_\alpha u^i = \int_{\partial\Omega} g^i w^i + \int_\Omega f^i w^i,\quad \forall \vec w \in W^{1,2}(\Omega)^m.
\end{equation}
Let  $\zeta$ be a smooth function on $\bR^d$ satisfying
\begin{equation}                \label{eq4zeta}
0\le \zeta\le 1,\quad \supp \zeta \subset B_{R/2}(x),\quad \zeta\equiv 1\,\text{ on }\, B_{3R/8}(x),\quad\text{and}\quad \abs{D\zeta} \le 16/R.
\end{equation}
We set $\vec w= \zeta \vec v_\epsilon$  in \eqref{eq4.23h}, where $\vec v_\epsilon=\vec v_{\epsilon,y,k}$ is the $k$-th column of $\vec N^\epsilon(\cdot,y)$, to get
\begin{equation}					\label{eq4.28sz}
\int_\Omega \zeta A^{\alpha\beta}_{ij} D_\beta v_\epsilon^j D_\alpha u^i =- \int_\Omega A^{\alpha\beta}_{ij} D_\beta \zeta  v_\epsilon^j D_\alpha u^i + \int_{\partial\Omega} \zeta g^i v_\epsilon^i+\int_\Omega \zeta f^i v_\epsilon^i.
\end{equation}
On the other hand, by setting $\vec \phi= \zeta \vec u$ in \eqref{eq4.1tv}, we get 
\begin{equation}				\label{eq4.29bd}
\int_ {\Omega_\epsilon(y)} \Phi_\epsilon \zeta u^k=\int_\Omega \zeta A^{\alpha\beta}_{ij} D_\beta v_\epsilon^j D_\alpha u^i + \int_\Omega A^{\alpha\beta}_{ij} D_\beta v_\epsilon^j D_\alpha \zeta u^i + \frac{1}{\abs{\partial\Omega}}\int_{\partial\Omega} \zeta u^k.
\end{equation}
Therefore, by combining \eqref{eq4.28sz} and \eqref{eq4.29bd}, we obtain
\begin{multline*}
\int_ {\Omega_\epsilon(y)} \Phi_\epsilon \zeta u^k= \int_\Omega A^{\alpha\beta}_{ij} D_\beta v_\epsilon^j D_\alpha \zeta u^i + \frac{1}{\abs{\partial\Omega}}\int_{\partial\Omega} \zeta u^k- \int_\Omega A^{\alpha\beta}_{ij} D_\beta \zeta  v_\epsilon^j D_\alpha u^i \\
- \int_{\partial\Omega} \zeta g^i v_\epsilon^i+\int_\Omega \zeta f^i v_\epsilon^i.
\end{multline*}

Now, assume that $y\in \Omega_{R/4}(x)$.
Notice from \eqref{eq4zeta} that $\dist(y, \supp D\zeta)> R/8$. 
Then by taking $\epsilon \to 0$ in the above identity, we get
\begin{multline}                   \label{eq4.32uq}
u^k(y)= \int_\Omega A^{\alpha\beta}_{ij} D_\beta N_{jk}(\cdot,y) D_\alpha \zeta u^i + \frac{1}{\abs{\partial\Omega}}\int_{\partial\Omega} \zeta u^k- \int_\Omega A^{\alpha\beta}_{ij} D_\beta \zeta  N_{jk}(\cdot,y) D_\alpha u^i \\
- \int_{\partial\Omega} \zeta g^i N_{ik}(\cdot,y)+\int_\Omega \zeta f^i N_{jk}(\cdot,y)=: I_1 + I_2 + I_3+I_4+I_5.
\end{multline}

On the other hand, observe that $\eta^2 \vec v$, where $\vec v$ is the $k$-th column of $\vec N(\cdot,y)$ and $\eta$ satisfies the properties in \eqref{eq4.19h}, belongs to $W^{1,2}(\Omega)^m$. 
Then by approximation we may take $\phi=\eta^2 \vec v$ in \eqref{eq2.6r} to get
\[
\int_{\Omega}\eta^2 A^{\alpha\beta}_{ij} D_\beta v^j D_\alpha v^i+\int_{\Omega} 2 \eta A^{\alpha\beta}_{ij} D_\beta v^j D_\alpha \eta v^i+ \frac{1}{\abs{\partial\Omega}}\int_{\partial\Omega} (\eta^2-1) v^k = 0,
\]
which corresponds to \eqref{eq4.34aa} in the proof of Theorem~\ref{thm2}.
Following exactly the same steps as in the proof of Theorem~\ref{thm2}, we then obtain the estimate \eqref{eq4.26gg}.
Also, from \eqref{eq4.23h} and the trace theorem, we derive Caccioppoli's inequality
\begin{equation}					\label{eq4.88vv}
\norm{D\vec u}_{L^2(\Omega_{R/2}(x))} \le C R^{-1}\norm{\vec u}_{L^2(\Omega_R(x))}+CR^{d/2}(1+R)\norm{\vec g}_{L^\infty(\Sigma_R(x))}+CR^{d/2+1}\norm{\vec f}_{L^\infty(\Omega_R(x))},
\end{equation}
where $C=C(d,m,\lambda, M,\Omega)$; see Appendix for the proof.

Denote $A_R(y)=\Omega_{3R/4}(y)\setminus B_{R/8}(y)$.
By using H\"older's inequality, \eqref{eq4.26gg}, and \eqref{eq2.17dd}, we estimate
\begin{align*}
\abs{I_1} &\le C R^{-1}\norm{D\vec N(\cdot,y)}_{L^2(A_R(y))}\, \norm{\vec u}_{L^2(\Omega_{R/2}(x))} \le C R^{-d/2}\norm{\vec u}_{L^2(\Omega_R(x))},\\
\abs{I_4} &\le C \norm{\vec N(\cdot,y)}_{L^1(\Sigma_{3R/4}(y))}\, \norm{\vec g}_{L^\infty(\Sigma_{R/2}(x))} \le CR \norm{\vec g}_{L^\infty(\Sigma_R(x))},\\
\abs{I_5} &\le C \norm{\vec N(\cdot,y)}_{L^1(\Omega_{3R/4}(y))}\, \norm{\vec f}_{L^\infty(\Omega_{R/2}(x))}\le  CR^2 \norm{\vec f}_{L^\infty(\Omega_R(x))}.
\end{align*}
Similarly, by H\"older's inequality, the trace theorem, and \eqref{eq4.88vv}, we estimate
\begin{multline*}
\abs{I_2} \le \abs{\partial\Omega}^{-1/2} \norm{\zeta\vec u}_{L^2(\partial\Omega)}\le C(1+R^{-1}) \norm{\vec u}_{L^2(\Omega_R(x))}+ C\norm{D\vec u}_{L^2(\Omega_{R/2}(x))} \\
\le C(1+R^{-1})\,\norm{\vec u}_{L^2(\Omega_R(x))}+C(R^{d/2}+R^{d/2+1})\norm{\vec g}_{L^\infty(\Sigma_R(x))}+CR^{d/2+1}\norm{\vec f}_{L^\infty(\Omega_R(x))}.
\end{multline*}
Also, by H\"older's inequality, \eqref{eq4.26gg}, and \eqref{eq4.88vv}, we get
\begin{multline*}
\abs{I_3} \le C R^{-1} \norm{\vec N(\cdot,y)}_{L^2(A_R(y))}\, \norm{D\vec u}_{L^2(\Omega_{R/2}(x))}\le C R^{1-d/2}\norm{D\vec u}_{L^2(\Omega_{R/2}(x))}\\
\le CR^{-d/2}\norm{\vec u}_{L^2(\Omega_R(x))} + C R(1+R)\norm{\vec g}_{L^\infty(\Sigma_R(x))}+ R^2 \norm{\vec f}_{L^\infty(\Omega_R(x))}.
\end{multline*}
Combining together, we get from \eqref{eq4.32uq} that
\begin{multline*}
\norm{\vec u}_{L^\infty(\Omega_{R/4})} \le C R^{-d/2}(1+R^{d/2}+R^{d/2-1})\norm{\vec u}_{L^2(\Omega_R)}+CR(1+R^{d/2-1}+R^{d/2}+R)\norm{\vec g}_{L^\infty(\Sigma_R)}\\
+CR^2(1+R^{d/2-1})  \norm{\vec f}_{L^\infty(\Omega_R(x))},\quad\text{where }\;C=C(d,m,\lambda, M,\Omega).
\end{multline*}
By a standard covering argument and the fact that $R<\diam(\Omega)<\infty$, we obtain \eqref{LB} from the above inequality.
The proof is complete.
\hfill\qedsymbol

\mysection{Neumann functions in Lipschitz graph domain}				\label{lgd}
This separate section is devoted to the study of Neumann functions in an unbounded domain above a Lipschitz graph.
\subsection{Main results} 
Since Lipschitz graph domains are necessarily unbounded domains, it is more practical to replace the condition \eqref{IH} in Section~\ref{main} by the following condition \eqref{IHp}.
In the case when the domain is bounded, it is equivalent to the condition \eqref{IH}, but it is weaker if the domain is unbounded.
By the well known De Giorgi-Moser-Nash theorem, we have the condition \eqref{IHp} with $R_c=\infty$ in the scalar case and thus, it reduces to the condition \eqref{IH}.

\begin{CIHp}
There exist $\mu_0\in (0,1]$, $R_c\in (0,\infty]$, and $C_0>0$ such that for all $x\in\Omega$ and $R \in(0, d_x')$, where $d_x':=\min(d_x,R_c)$, the following holds:
If $\vec u\in W^{1,2}(B_R(x))$ is a weak solution of either $L\vec u=0$ or $\Lt \vec u=0$ in $B_R=B_R(x)$,  then $\vec u$ is H\"older continuous in $B_R$ with the following estimate:
\begin{equation}
\tag{IH$'$}\label{IHp}
[\vec u]_{C^{\mu_0}(B_{R/2})} \le C_0 R^{-\mu_0}\left(\fint_{B_R} \abs{\vec u}^2\right)^{1/2}.
\end{equation}
\end{CIHp}

\begin{theorem}		\label{thm4}
Let $\Omega$ be a Lipschitz graph domain in $\bR^d$ ($d\ge 3$).
Assume the condition \eqref{IHp}.
Then there exist Neumann functions $\vec N(x,y)$ of $L$ and $\tilde{\vec N}(x,y)$ of $\Lt$ in $\Omega$ satisfying the identity  \eqref{eq3.01mq}.
Furthermore, the estimates i) -- vii) in Theorem~\ref{thm1} are valid for $\vec N(\cdot,y)$ and $\tilde{\vec N}(\cdot,y)$ for all $y\in\Omega$ provided $d_x$ is replaced by $d_x'=\min(d_x, R_c)$.
\end{theorem}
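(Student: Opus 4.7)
The strategy is to mimic the proof of Theorem~\ref{thm1} with suitable modifications for the Lipschitz graph setting. The plan is to first construct mollified Neumann functions, derive uniform bounds under the weaker hypothesis $\mathrm{(IH}'\mathrm{)}$, and then pass to the limit.

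First, I would fix $y\in\Omega$ and $\Phi\in C_c^\infty(\bR^d)$ supported in $B_1(0)$ with $\int\Phi=1$, and set $\Phi_\epsilon(x)=\epsilon^{-d}\Phi((x-y)/\epsilon)$. By the Sobolev inequality \eqref{eq5.3nh} and the Lax-Milgram theorem applied to the bilinear form $B$ on the Hilbert space $Y^{1,2}(\Omega)^m$ (as described in the subsection on Neumann problems in a Lipschitz graph domain), there exists a unique $\vec v=\vec v_{\epsilon,y,k}\in Y^{1,2}(\Omega)^m$ solving $L\vec v=\Phi_\epsilon \vec e_k$ in $\Omega$ with $\vec A D\vec v\cdot \vec n=0$ on $\partial\Omega$; set $N^\epsilon_{jk}(\cdot,y)=v^j$. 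Note that here no compatibility condition or boundary correction term is needed because $Y^{1,2}(\Omega)$ embeds directly into $L^{2d/(d-2)}(\Omega)$ by \eqref{eq5.3nh}. Testing with $\vec v$ and using \eqref{eqP-02}, H\"older's inequality, and \eqref{eq5.3nh} yields the energy bound $\norm{D\vec N^\epsilon(\cdot,y)}_{L^2(\Omega)} \le C\epsilon^{(2-d)/2}$, with $C=C(d,m,\lambda,M,K)$ where $K$ is the Lipschitz constant.

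Next, I would fix $R\in (0,d_y')$ and $\vec f\in C_c^\infty(B_R(y))^m$ supported in $\Omega$, and let $\vec u\in Y^{1,2}(\Omega)^m$ solve \eqref{eq5.10yq}. Testing the defining identities against each other as in the proof of Theorem~\ref{thm1} gives the duality identity $\int_\Omega N^\epsilon_{ik}(\cdot,y) f^i = \int_{\Omega_\epsilon(y)} \Phi_\epsilon u^k$, and the standard energy estimate $\norm{D\vec u}_{L^2(\Omega)} \le C\norm{\vec f}_{L^{2d/(d+2)}(\Omega)}$. Since $R<d_y'\le R_c$, the hypothesis \eqref{IHp} still applies on balls $B_\rho(x)$ with $\rho<d_x'$, so the argument of \cite{HK07} (used for deriving $\norm{\vec u}_{L^\infty(B_{R/2})}\le CR^2\norm{\vec f}_{L^\infty(B_R)}$) goes through verbatim, provided we only use balls whose radius is less than the relevant $d_x'$. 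Duality then gives $\norm{\vec N^\epsilon(\cdot,y)}_{L^1(B_R(y))}\le CR^2$ for all $\epsilon<R/2$, and a standard Caccioppoli/Moser argument (justified again by \eqref{IHp} restricted to scales below $R_c$) yields the pointwise bound $\abs{\vec N^\epsilon(x,y)}\le C\abs{x-y}^{2-d}$ for $0<\abs{x-y}<d_y'/2$ and $\epsilon<\abs{x-y}/3$.

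From here the derivation of estimates i)--v) proceeds exactly as in the bounded case: one tests \eqref{eq4.2xa}-type identity (now without any boundary contribution, since Neumann data is zero and no $1/\abs{\partial\Omega}$ term appears) against $\eta^2\vec v_\epsilon$ with $\eta$ a cutoff supported outside $B_{r/2}(y)$ and equal to $1$ outside $B_r(y)$, yielding $\int_{\Omega\setminus B_r(y)}\abs{D\vec N^\epsilon(\cdot,y)}^2\le Cr^{2-d}$; combined with \eqref{eq5.3nh} this gives the $L^{2d/(d-2)}$ bound, and the weak-type and strong $L^p$ estimates follow as in \cite[pp.~147--148]{HK07}. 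These bounds yield a weakly convergent subsequence $\vec N^{\epsilon_\mu}(\cdot,y)\rightharpoonup \vec N(\cdot,y)$ in the appropriate spaces, and $\vec N(\cdot,y)$ satisfies the properties i) and ii) of Section~\ref{sec:nf2}. The pointwise bound vi) and H\"older estimate vii) then follow by the same Moser/H\"older argument as in Theorem~\ref{thm1}, restricted to balls of radius $<d_y'/2$. The symmetry \eqref{eq3.01mq} is obtained by the standard cross-testing argument: setting $\vec\phi$ in the mollified identity for $L$ equal to a column of $\tilde{\vec N}{}^{\epsilon'}(\cdot,x)$ and vice versa gives $\int\Phi_{\epsilon'}N^\epsilon_{lk}(\cdot,y)=\int\Phi_\epsilon \tilde{N}{}^{\epsilon'}_{kl}(\cdot,x)$, then letting $\epsilon,\epsilon'\to 0$. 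Uniqueness and property iii) follow from \eqref{eq5.4aa} and the symmetry identity.

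The main obstacle is the restriction of scales imposed by the threshold $R_c$: the pointwise and $L^p$ bounds can only be derived on balls of radius strictly less than $d_x'$, because \eqref{IHp} only supplies the needed interior H\"older control at those scales. This is exactly why the estimates are stated with $d_x'$ in place of $d_x$. A secondary technical point is verifying that all the cross-testing of mollified identities is legitimate when $\vec\phi$ is only in $Y^{1,2}(\Omega)^m$ rather than $C^\infty(\overline\Omega)^m$; this is handled by the density of $C_c^\infty(\overline\Omega)^m$ in $Y^{1,2}(\Omega)^m$, which can be established using the Lipschitz graph structure (similar to the argument for \eqref{eq5.3nh} deferred to the appendix).
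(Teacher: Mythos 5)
Your proposal is correct and follows essentially the same route as the paper: construct mollified Neumann functions in $Y^{1,2}(\Omega)^m$ with zero conormal data (no compatibility condition or $1/\abs{\partial\Omega}$ correction needed, thanks to \eqref{eq5.3nh}), repeat the duality, Caccioppoli, weak-type/strong-type, and limiting arguments of Theorem~\ref{thm1} with $d_y$ replaced by $d_y'=\min(d_y,R_c)$, and obtain symmetry and property iii) by the same cross-testing argument. The two points you flag — the scale restriction imposed by $R_c$ and the density of $C_c^\infty(\overline\Omega)$ in $Y^{1,2}(\Omega)$ — are exactly the adjustments the paper makes (the latter is handled in its Appendix).
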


We also replace the condition \eqref{LB} in Section~\ref{main} by the following condition \eqref{LBp}.
In the scalar case, it is well known that the condition \eqref{LBp} holds in Lipschitz graph domains.

\begin{CLBp}
There exists a constant $C_1>0$ such that the following holds:
For any $\vec f \in C_c^\infty(\Omega)^m$, let $\vec u \in  Y^{1,2}(\Omega)^m$ be a unique weak solution of the problem
\[
\begin{aligned}
\left\{
\begin{aligned}
L\vec u&=\vec f \quad\text{in }\;\Omega\\
\vec A D\vec u \cdot \vec n&= 0 \quad \text{on }\;\partial\Omega
\end{aligned}
\right. 
\qquad\;\text{ or }\;\qquad
\left\{
\begin{aligned}
\Lt\vec u&=\vec f \quad\text{in }\;\Omega\\
{}^t\!\vec A D\vec u \cdot \vec n&= 0\quad \text{on }\;\;\partial\Omega.
\end{aligned}
\right.
\end{aligned}
\]
Then for all $x\in\Omega$ and $R>0$, we have
\[			\tag{LB$'$}\label{LBp}
\norm{\vec u}_{L^\infty(\Omega_{R/2}(x))} \le C_1 \left(R^{-d/2} \norm{\vec u}_{L^2(\Omega_R(x))}+ R^2 \norm{\vec f}_{L^\infty(\Omega_R(x))} \right).
\]
\end{CLBp}

\begin{theorem}							\label{thm5}
Let $\Omega$ be a Lipschitz graph domain in $\bR^d$ ($d\ge 3$) with Lipschitz constant $K$ and assume the condition \eqref{IHp}.
If the condition \eqref{LBp} is also satisfied, then conclusions of Theorem~\ref{thm2} hold with $C=C(d,m,\lambda,M,K,C_1)$.
Conversely, suppose there exists a constant $C_2$ such that \eqref{eq2.17dd} holds.
Then the condition \eqref{LBp} is satisfied in $\Omega$ with $C_1=C_1(d,m,\lambda,M,K,C_2)$.
\end{theorem}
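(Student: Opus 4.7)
The plan is to adapt the arguments of Sections 4.2 and 4.3 to the Lipschitz graph setting, where two structural changes are in play: the Neumann boundary data for $\vec N(\cdot,y)$ is now identically zero (cf.\ \eqref{eq5.6r}) so there is no $(1/\abs{\partial\Omega})$ term and no compatibility condition to enforce, but $\Omega$ is unbounded so every estimate must be homogeneous in the scale $R\in(0,\infty)$ rather than relying on $\diam(\Omega)$. The Sobolev inequality \eqref{eq5.3nh} is the key tool that replaces \eqref{eq2.3cd}, and its constant depends only on $d$ and the Lipschitz constant $K$. The existence of a mollified Neumann function $\vec N^\epsilon(\cdot,y)\in Y^{1,2}(\Omega)^m$ solving $L\vec v=\Phi_\epsilon\vec e_k$ with $\vec A D\vec v\cdot \vec n=0$ is obtained from Section~\ref{sec:nf2}, and the construction of $\vec N(x,y)$ itself follows from Theorem~\ref{thm4} together with a limiting argument identical to the one in Theorem~\ref{thm1}.

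For the forward direction, I would fix $y\in\Omega$, let $\vec f\in C_c^\infty(\Omega_R(y))^m$, and let $\vec u\in Y^{1,2}(\Omega)^m$ solve the adjoint Neumann problem \eqref{eq5.10yq}. Testing against $\vec u$ and using \eqref{eq5.3nh} gives $\norm{\vec u}_{L^{2d/(d-2)}(\Omega)}\le C\norm{\vec f}_{L^{2d/(d+2)}(\Omega)}\le CR^{(d+2)/2}\norm{\vec f}_{L^\infty}$. Applying $\LB'$ directly (with zero boundary data) produces $\norm{\vec u}_{L^\infty(\Omega_{R/2}(y))}\le CR^2\norm{\vec f}_{L^\infty(\Omega_R(y))}$, and the duality identity analogous to \eqref{eq4.3wx} yields $\norm{\vec N^\epsilon(\cdot,y)}_{L^1(\Omega_R(y))}\le CR^2$ uniformly in $\epsilon<R/2$. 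Next, applying $\LB'$ to $\vec N^\epsilon(\cdot,y)$ on any ball $B_r(x)$ disjoint from $B_\epsilon(y)$ (where the right hand side of the equation vanishes) and iterating as in \cite[pp.~80--82]{Gi93} gives the \emph{clean} bound $\norm{\vec N^\epsilon(\cdot,y)}_{L^\infty(\Omega_{r/2}(x))}\le Cr^{-d}\norm{\vec N^\epsilon(\cdot,y)}_{L^1(\Omega_r(x))}$ — with no additive $Cr$ term of the sort appearing in \eqref{eq2.8r}, precisely because the Neumann data for $\vec N^\epsilon$ is zero on $\partial\Omega$. Choosing $r=\abs{x-y}/3$ and combining with the $L^1$ bound yields $\abs{\vec N^\epsilon(x,y)}\le C\abs{x-y}^{2-d}$, and passing to the limit in $\epsilon$ gives \eqref{eq2.17dc}. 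Estimates i)--v) then follow from an $\eta^2\vec N$ test function in \eqref{eq5.6r} (which, having no $1/\abs{\partial\Omega}$ boundary integral, produces an even simpler Caccioppoli inequality than \eqref{eq4.34aa}), combined with \eqref{eq5.3nh} and the pointwise bound, reproducing \eqref{eq4.26gg} for all $r>0$.

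For the converse, by symmetry it suffices to consider $\Lt\vec u=\vec f$ with $\vec f\in C_c^\infty(\Omega)^m$ and $\vec g\equiv 0$. Following Section~\ref{sec4.2}, I would introduce $\zeta$ as in \eqref{eq4zeta} and substitute $\vec w=\zeta\vec v_\epsilon$ in the analogue of \eqref{eq4.23h} and $\vec\phi=\zeta\vec u$ in the analogue of \eqref{eq4.1tv}. Because the boundary integrals $(1/\abs{\partial\Omega})\int_{\partial\Omega}\zeta\vec u$ and $\int_{\partial\Omega}\zeta\vec g\cdot\vec v_\epsilon$ are both absent in the graph case, the representation of $u^k(y)$ (for $y\in\Omega_{R/4}(x)$) reduces to three terms $I_1,I_3,I_5$ rather than the five of \eqref{eq4.32uq}. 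Each term is bounded by H\"older's inequality, the energy estimate \eqref{eq4.26gg} for $\vec N(\cdot,y)$ (whose proof I already described above), the pointwise hypothesis \eqref{eq2.17dd}, and the scale-invariant Caccioppoli inequality for $\vec u$ (derivable directly from the weak formulation without trace-theorem boundary terms). Summing yields the required local boundedness $\norm{\vec u}_{L^\infty(\Omega_{R/4}(x))}\le C(R^{-d/2}\norm{\vec u}_{L^2(\Omega_R(x))}+R^2\norm{\vec f}_{L^\infty(\Omega_R(x))})$ with constants depending only on $d,m,\lambda,M,K,C_2$, and a standard covering argument upgrades $R/4$ to $R/2$.

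The main obstacle is bookkeeping: in Section~\ref{sec4.2} and Section~4.3 the bound $R<\diam(\Omega)$ is repeatedly used to absorb lower-order terms (for instance in \eqref{eq4.22hh} and in the final combination at the end of Section~4.3), and the constants depend on $\abs{\partial\Omega}$. Here no such bound is available, and one must verify that every estimate is genuinely homogeneous of the correct order in $R$. Fortunately, the disappearance of the $1/\abs{\partial\Omega}$ boundary term makes this possible: each step that previously produced an additive $Cr$ (or a factor of $\abs{\partial\Omega}$) now produces nothing, and the Sobolev inequality \eqref{eq5.3nh} — with constant depending only on $d$ and $K$ — carries the scaling uniformly across all $R>0$.
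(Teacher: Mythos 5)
Your proposal is correct and follows essentially the same route as the paper's proof: derive the $L^1$ bound by duality and \eqref{LBp}, exploit the absence of the $1/\abs{\partial\Omega}$ boundary term to get the clean iteration bound and hence the scale-invariant pointwise estimate for all $R>0$, obtain the energy/Caccioppoli estimates from $\eta^2$-type test functions, and run the three-term representation argument for the converse. The only cosmetic deviation is that you test \eqref{eq5.6r} directly with $\eta^2\vec N(\cdot,y)$ (via density of $C_c^\infty(\overline\Omega)$ in $Y^{1,2}(\Omega)$) in the forward direction, where the paper works with the mollified identity \eqref{eq6.1tv} and lets $\epsilon\to0$; both are legitimate and the paper itself uses the density argument in the converse half.
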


\subsection{Proof of Theorem~\ref{thm4}}
The proof is a slight modification of that of Theorem~\ref{thm1}.
Let $y\in\Omega$ be fixed but arbitrary.
For $\epsilon>0$ and $k=1,\ldots,m$, let $\vec v=\vec v_{\epsilon, y, k}$ be a unique weak solution in $Y^{1,2}(\Omega)^m$ of the problem
\begin{equation}					\label{eq5.0ik}
\left\{
\begin{aligned}
L \vec v &= \Phi_\epsilon \vec e_k\;\text{ in }\;\Omega,\\
\vec A D\vec v \cdot \vec n&=0 \;\text{ on }\; \partial\Omega,
\end{aligned}
\right.
\end{equation}
and define $\vec N^\epsilon(\cdot,y)$ by \eqref{eq4.1uu}.
Then $\vec N^\epsilon(\cdot,y)$ satisfies the identity
\begin{equation}				\label{eq6.1tv}
\int_{\Omega}A^{\alpha\beta}_{ij} D_\beta N_{jk}^\epsilon(\cdot,y) D_\alpha \phi^i = \int_ {\Omega_\epsilon(y)} \Phi_\epsilon \phi^k ,\quad
\forall \vec \phi \in Y^{1,2}(\Omega)^m.
\end{equation}
By the same argument as in the proof of Theorem~\ref{thm1}, we then obtain \eqref{eqG-02}.
Let $R\in (0,d_y')$ be arbitrary, but fixed.
Assume that $\vec f \in C_c^\infty(\Omega)^m$ is supported in $B_R=B_R(y) \subset \Omega$ and let $\vec u$ be a unique weak solution in $Y^{1,2}(\Omega)^m$ of the problem \eqref{eq5.10yq}.
Then, we get the identity \eqref{eq4.3wx} and also the estimate \eqref{eq4.4ur}.
By literally the same steps in the proof of Theorem~\ref{thm1}, we get  \eqref{eq4.5ac} and \eqref{eqG-20} -- \eqref{eq4.17ht} with $d_y$ replaced by $d_y'$.
Therefore, by the same reasoning as in the proof of Theorem~\ref{thm1}, we find that $\vec N(\cdot,y)$ satisfies the properties i) and ii) in Section~\ref{sec:nf2}, and also the estimates i) -- vii) in Theorem~\ref{thm1} with $d_y$ replaced by $d_y'$; see \cite[Section~4.1]{HK07}.
Let $\vec u$ be a unique solution in $Y^{1,2}(\Omega)^m$ of the problem \eqref{eq5.10yq} with $\vec f\in C_c^\infty(\Omega)^m$.
Then as in the proof of Theorem~\ref{thm1} again, we get \eqref{eq2.9x}.
Therefore, $\vec N(x,y)$ satisfies the property iii) in Section~\ref{sec:nf2}, and thus that $\vec N(x,y)$ is a unique Neumann function of the operator $L$ in $\Omega$.
The proof is complete.
\hfill\qedsymbol

\subsection{Proof of Theorem~\ref{thm5}}
We follow the proofs of Theorem~\ref{thm2} and \ref{thm3} with a few adjustment.
Let $y\in \Omega$ and $R>0$ be arbitrary, but fixed.
Assume that $\vec f \in C_c^\infty(\Omega)^m$ is supported in $\Omega_R(y)$ and let $\vec u$ be a unique weak solution in $Y^{1,2}(\Omega)^m$ of the problem
\[
\left\{
\begin{aligned}
\Lt \vec u &=\vec f\;\text{ in }\;\Omega,\\
{}^t\! \vec A D\vec u \cdot \vec n&= 0\;\text{ on }\;\partial\Omega.
\end{aligned}
\right.
\]
Then we have  \eqref{eq4.3wx} and \eqref{eq4.4ur} as in the proof of Theorem~\ref{thm2}, and thus by \eqref{eq5.3nh} we get \eqref{eq3.1a}.
By \eqref{LBp}, \eqref{eq3.1a}, and H\"older's inequality, we obtain \eqref{eq3.2v}.
Then by following the same steps as in the proof of Theorem~\ref{thm2} we get \eqref{eq3.3w} with $C=C(d,m,\lambda, M, K, C_1)$.

Let $x\in\Omega$,  $r>0$, and $\epsilon>0$ be such that $B_\epsilon(y)\cap B_r(x) =\emptyset$.
Since  the $k$-th column of $\vec N^\epsilon(\cdot,y)$ is a unique weak solution in $Y^{1,2}(\Omega)^m$ of the problem \eqref{eq5.0ik}, the condition \eqref{LBp} implies that
\[
\norm{\vec N^\epsilon(\cdot, y)}_{L^\infty(\Omega_{r/2}(x))} \le C_1 r^{-d/2} \norm{\vec N^\epsilon(\cdot, y)}_{L^2(\Omega_r(x))}.
\]
Then by following literally the same steps as in the proof of Theorem~\ref{thm2}, we obtain the desired pointwise bound \eqref{eq2.17dc}.

Next, let $\vec v_\epsilon$ be the $k$-th column of $\vec N^\epsilon(\cdot,y)$, where $k=1,\ldots, m$, $0<\epsilon<\min(d_y',r)/6$, and $r>0$.
Let $\eta$ be a smooth function on $\bR^d$ satisfying the conditions in \eqref{eq4.19h}.
We set $\vec \phi=\eta^2\vec v_\epsilon$ in \eqref{eq6.1tv} to get the Caccioppoli's  inequality
\begin{equation}					\label{eq5.10ng}
\int_\Omega \eta^2 \abs{D \vec N^\epsilon(\cdot,y)}^2 \le C \int_{\Omega} \abs{D \eta}^2 \abs{\vec N^\epsilon(\cdot,y)}^2.
\end{equation}
By using the conditions in \eqref{eq4.19h} for $\eta$ and the pointwise bound \eqref{eq2.17dc}, we get 
\[
\int_{\Omega\setminus B_r(y)} \abs{D \vec N^\epsilon(\cdot,y)}^2 \le C r^{-2} \int_{B_r(y)\setminus B_{r/2}(y)} \abs{x-y}^{4-2d}\,dx \le C r^{2-d}.
\]
Therefore, by taking the limit $\epsilon\to 0$, we get
\[
\norm{D\vec N(\cdot,y)}_{L^2(\Omega\setminus B_r(y))}\le C r^{(2-d)/2},\quad \forall r>0.
\]
The pointwise bound \eqref{eq2.17dc} together with the above estimate yields
\begin{equation}							\label{eq6.26gg}
\norm{\vec N(\cdot,y)}_{L^{2d/(d-2)}(\Omega\setminus B_r(y))}+\norm{D\vec N(\cdot,y)}_{L^2(\Omega\setminus B_r(y))}\le C r^{(2-d)/2},\quad \forall r>0,
\end{equation}
where $C=C(d,m,\lambda, M, K, C_1)$.
By following literally the same step used in deriving \eqref{eq4.12eq} -- \eqref{eq4.15tw} from \eqref{eq4.11bs} we obtain from \eqref{eq6.26gg} the estimates i) -- v) in Theorem~\ref{thm2} with constants $C=C(d,m,\lambda, M, K, C_1)$.

It remains to show that the pointwise bound \eqref{eq2.17dc} implies the condition \eqref{LBp}.
By the symmetry, it is enough to prove \eqref{LBp} for a weak solution $\vec u\in Y^{1,2}(\Omega)^m$ of  the problem
\[
\left\{
\begin{aligned}
\Lt\vec u&=\vec f \quad\text{in }\;\Omega,\\
{}^t\!\vec A D\vec u \cdot \vec n&=0 \quad \text{on }\;\partial\Omega.
\end{aligned}
\right.
\]
Let $\vec u$ be a unique weak solution in $Y^{1,2}(\Omega)^m$ of the above problem, where $\vec f\in C_c^\infty(\Omega)^m$, so that we have the identity 
\begin{equation}                \label{eq6.23h}
\int_\Omega A^{\alpha\beta}_{ij} D_\beta w^j D_\alpha u^i =  \int_\Omega f^i w^i,\quad \forall \vec w \in Y^{1,2}(\Omega)^m.
\end{equation}
We set $\vec w= \zeta \vec v_\epsilon$ in \eqref{eq6.23h}, where $\zeta$ is as in \eqref{eq4zeta} and $\vec v_\epsilon$ is the weak solution in $Y^{1,2}(\Omega)^m$ of the problem \eqref{eq5.0ik} (i.e., $\vec v_\epsilon$ is the $k$-th column of $\vec N^\epsilon(\cdot,y)$), to get
\begin{equation}					\label{eq6.28sz}
\int_\Omega \zeta A^{\alpha\beta}_{ij} D_\beta v_\epsilon^j D_\alpha u^i =- \int_\Omega A^{\alpha\beta}_{ij} D_\beta \zeta  v_\epsilon^j D_\alpha u^i +\int_\Omega \zeta f^i v_\epsilon^i.
\end{equation}
On the other hand, by setting $\vec \phi= \zeta \vec u$ in \eqref{eq6.1tv}, we get 
\begin{equation}				\label{eq6.29bd}
\int_ {\Omega_\epsilon(y)} \Phi_\epsilon \zeta u^k=\int_\Omega \zeta A^{\alpha\beta}_{ij} D_\beta v_\epsilon^j D_\alpha u^i + \int_\Omega A^{\alpha\beta}_{ij} D_\beta v_\epsilon^j D_\alpha \zeta u^i.
\end{equation}
Therefore, by combining \eqref{eq6.28sz} and \eqref{eq6.29bd}, we obtain
\[
\int_ {\Omega_\epsilon(y)} \Phi_\epsilon \zeta u^k= \int_\Omega A^{\alpha\beta}_{ij} D_\beta v_\epsilon^j D_\alpha \zeta u^i - \int_\Omega A^{\alpha\beta}_{ij} D_\beta \zeta  v_\epsilon^j D_\alpha u^i+\int_\Omega \zeta f^i v_\epsilon^i.
\]
Assume $y\in \Omega_{R/4}(x)$ and take $\epsilon \to 0$ in the above identity to get (c.f. \eqref{eq4.32uq}) 
\begin{multline*}
u^k(y)= \int_\Omega A^{\alpha\beta}_{ij} D_\beta N_{jk}(\cdot,y) D_\alpha \zeta u^i - \int_\Omega A^{\alpha\beta}_{ij} D_\beta \zeta  N_{jk}(\cdot,y) D_\alpha u^i \\
+\int_\Omega \zeta f^i N_{jk}(\cdot,y)=: I_1 + I_2 + I_3.
\end{multline*}

On the other hand, by using the fact that $C_c^\infty(\overline\Omega)$ is dense in $Y^{1,2}(\Omega)$ (see the proof of Lemma~\ref{lem6.8ap} in Appendix), we may set $\phi=\eta^2 \vec v$ in \eqref{eq5.6r}, where $\vec v$ is the $k$-th column of $\vec N(\cdot,y)$ and $\eta$ satisfies the properties in \eqref{eq4.19h}, to get the following  inequality (c.f. \eqref{eq5.10ng}):
\[
\int_\Omega \eta^2 \abs{D \vec N(\cdot,y)}^2 \le C \int_{\Omega} \abs{D \eta}^2 \abs{\vec N(\cdot,y)}^2.
\]
Then by proceeding as before, we again obtain the estimate \eqref{eq6.26gg}.
With aid of \eqref{eq5.3nh}, we also derive the following Caccioppoli's inequality from \eqref{eq6.23h}:
\begin{equation}					\label{eq6.88vv}
\norm{D \vec u}_{L^2(\Omega_{R/2}(x))} \leq CR^{-1} \norm{\vec u}_{L^2(\Omega_R(x))}+C\norm{\vec f}_{L^{2d/(d+2)}(\Omega_R(x))}.
\end{equation}
Now, denote $A_R(y)=\Omega_{3R/4}(y)\setminus B_{R/8}(y)$.
Recall that $\zeta$ satisfies the properties in \eqref{eq4zeta}.
Then by H\"older's inequality and \eqref{eq6.26gg}, we estimate
\[
\abs{I_1} \le C R^{-1}\norm{D\vec N(\cdot,y)}_{L^2(A_R(y))}\, \norm{\vec u}_{L^2(\Omega_{R/2}(x))} \le C R^{-d/2}\norm{\vec u}_{L^2(\Omega_{R}(x))}.
\]
Similarly, by H\"older's inequality, \eqref{eq6.26gg}, and \eqref{eq6.88vv}, we estimate
\[
\abs{I_2} \le C R^{-1} \norm{\vec N(\cdot,y)}_{L^2(A_R(y))}\, \norm{D\vec u}_{L^2(\Omega_{R/2}(x))} \le C R^{-d/2}\norm{\vec u}_{L^2(\Omega_R(x))}+R^2\norm{\vec f}_{L^\infty(\Omega_{R}(x))}.
\]
Finally, by H\"older's inequality and \eqref{eq2.17dd}, we estimate
\[
\abs{I_3} \le C \norm{\vec N(\cdot,y)}_{L^1(\Omega_{3R/4}(y))}\, \norm{\vec f}_{L^\infty(\Omega_{R/2}(x))}\le C R^2 \norm{\vec f}_{L^\infty(\Omega_{R}(x))}.
\]
Combining the above estimates and using a standard covering argument, we obtain \eqref{LBp}.
The proof is complete.
\hfill\qedsymbol

\begin{acknowledgment}
We thank Russell Brown and Hongjie Dong for valuable discussion and comments.
We also thank the referee for helpful suggestions.
This work was supported by Basic Science Research Program through the National Research Foundation of Korea(NRF) funded by the Ministry of Education, Science and Technology (2010-0008224).
Seick Kim is supported by WCU(World Class University) program through the National Research Foundation of Korea(NRF) funded by the Ministry of Education, Science and Technology (R31-10049) and also by TJ Park Junior Faculty Fellowship.
\end{acknowledgment}

\section{Appendix}
\begin{lemma}
Let $\Omega$ be a bounded $C^1$ domain in $\bR^d$.
Suppose that the coefficients $A^{\alpha\beta}_{ij}$ of the system \eqref{eq0.0} belong to the VMO class and satisfy the conditions \eqref{eqP-02} and \eqref{eqP-03}.
Then the condition \eqref{LB} is satisfied.
\end{lemma}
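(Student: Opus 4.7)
The plan is to derive \LB{} by combining two standard ingredients available for divergence-form systems with \VMO{} coefficients on bounded $C^1$ domains: the full $W^{1,p}$-theory for Neumann problems (valid for every $1<p<\infty$), and the Morrey--Sobolev embedding $W^{1,p}\hookrightarrow L^\infty$ for $p>d$. Fix once and for all a value $p>d$. The target reduces to a scale-invariant local $W^{1,p}$-estimate of the form
\[
\norm{D\vec u}_{L^p(\Omega_{R/2}(x))} \le C\Bigl(R^{-1-d/2+d/p}\norm{\vec u}_{L^2(\Omega_R(x))} + R^{1+d/p}\norm{\vec f}_{L^\infty(\Omega_R(x))} + R^{d/p}\norm{\vec g}_{L^\infty(\Sigma_R(x))}\Bigr),
\]
since, once combined with Morrey's inequality applied to $\vec u-(\vec u)_{\Omega_{R/2}(x)}$ and Poincar\'e, this yields the pointwise bound claimed in \LB{} (the average is absorbed into $R^{-d/2}\norm{\vec u}_{L^2(\Omega_R(x))}$).

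First I would reduce to a normalized setting of unit scale. In the interior case $B_R(x)\subset\Omega$, dilation by $R$ converts the local problem to one posed on $B_1$; in the boundary case, a local $C^1$-diffeomorphism flattens $\partial\Omega\cap B_{2R}(x)$ and is followed by a dilation, producing a problem on a domain close in $C^1$-norm to the half-ball $B_1^+$. These changes of variable preserve uniform ellipticity, the $L^\infty$-bound on the coefficients, and, crucially, the \VMO{}-modulus --- a $C^1$ diffeomorphism maps \VMO{} to \VMO{}, and as $R\downarrow 0$ both the relevant \VMO{}-modulus on unit balls and the $C^1$-seminorm of the transformed boundary only improve. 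On the normalized domain I would localize by multiplying $\vec u$ by a smooth cutoff $\eta$ equal to $1$ on the image of $\Omega_{R/2}(x)$ and supported inside the image of $\Omega_{3R/4}(x)$, and work with $\vec w:=\eta(\vec u-\vec c)$, where $\vec c$ is chosen so that the new Neumann-type problem for $\vec w$ satisfies the compatibility condition required for solvability. A direct computation shows that $\vec w$ solves a Neumann problem whose source and boundary data are bounded by the original $\vec f$ and $\vec g$ together with lower-order commutator terms of the form $\vec A^{\alpha\beta}D_\beta\eta\,(\vec u-\vec c)$ and $\vec A^{\alpha\beta}D_\beta(\vec u-\vec c)\,D_\alpha\eta$.

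Next I would invoke the global $W^{1,p}$-estimate for Neumann problems with \VMO{} coefficients on $C^1$ domains (Chiarenza--Frasca--Longo, Di Fazio, Auscher--Qafsaoui, Byun--Wang, Dong--Kim) applied to $\vec w$ on the normalized domain; this produces a bound of the form $\norm{D\vec w}_{L^p}\le C(\norm{F}_{L^{pd/(p+d)}}+\norm{G}_{L^p(\partial)})$. The commutator term involving $D\vec u$ is controlled in $L^2$ by the Caccioppoli inequality for the original equation (which gives $\norm{D\vec u}_{L^2(\Omega_{3R/4}(x))}\lesssim R^{-1}\norm{\vec u}_{L^2(\Omega_R(x))}$ plus data terms, exactly as in \eqref{eq4.88vv}), and is then interpolated into $L^{pd/(p+d)}$ by H\"older's inequality, with the resulting powers of $R$ matching those in the displayed $W^{1,p}$-bound. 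Undoing the rescaling restores the $R$-powers shown above, and the Morrey embedding finishes the proof. The main obstacle will be the bookkeeping needed to make the whole argument genuinely scale-invariant: one must track how the \VMO{}-modulus and the $C^1$-modulus of the flattening map transform under dilation, verify that the mean $\vec c$ chosen to preserve the compatibility condition is controlled by $R^{-d/2}\norm{\vec u}_{L^2(\Omega_R(x))}$ via Poincar\'e, and confirm that the global Neumann $W^{1,p}$-theory, typically formulated on a fixed $C^1$ domain, applies with a constant that is uniform across the family of normalized domains generated as $R$ and $x$ vary.
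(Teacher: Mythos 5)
Your overall strategy (VMO $W^{1,p}$ theory with $p>d$ plus Morrey embedding) is the same as the paper's, but there is a genuine gap at the step where you control the cutoff commutator terms. After localization, the source term $\vec A^{\alpha\beta}D_\alpha\eta\,D_\beta\vec u$ must be estimated in $L^{pd/(p+d)}$ and the divergence-form term $\vec A^{\alpha\beta}D_\beta\eta\,\vec u$ in $L^p$. You propose to bound the first by the $L^2$ Caccioppoli estimate ``and then interpolate into $L^{pd/(p+d)}$ by H\"older's inequality.'' But H\"older only lowers the exponent: $\norm{D\vec u}_{L^{pd/(p+d)}}\le C\norm{D\vec u}_{L^2}$ on a bounded set requires $pd/(p+d)\le 2$, i.e.\ $p\le 2d/(d-2)$, which is incompatible with $p>d$ in every dimension $d\ge 4$ (only $d=3$ admits a choice $p\in(3,6]$). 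The same problem occurs for the term requiring $\norm{\vec u}_{L^p}$, which is not controlled by $R^{-d/2}\norm{\vec u}_{L^2(\Omega_R)}$ via H\"older or Poincar\'e. So as written the single application of the $W^{1,p}$ estimate does not close.

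This is exactly the difficulty the paper's proof is organized around: it applies the global $W^{1,p}$ estimate (built from the Fabes--Mendez--Mitrea result for the auxiliary Laplace Neumann problem and the Byun--Chen--Kim--Wang divergence-data estimate) \emph{iteratively}, on a finite ladder of exponents $p_i=pd/(d+pi)$ and shrinking radii $r_i$, with $k\ge d(1/2-1/p)$ steps, so that the gradient term is eventually measured in some $L^{p_k}$ with $p_k\le 2$ where Caccioppoli applies; the leftover $\norm{\vec u}_{L^p}$ term is then removed by the standard absorption/iteration lemma of Giaquinta (pp.~80--82) after the Morrey--Campanato step. To repair your argument you would need to insert an analogous bootstrap (self-improvement of the integrability of $D\vec u$ over finitely many scales) before invoking Morrey. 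Your other concern --- uniformity of the $W^{1,p}$ constant under dilation and $C^1$ flattening --- is real but manageable (the VMO modulus does not deteriorate under dilation by $R\le 1$, and a bounded $C^1$ domain has uniform boundary charts); note, however, that the paper sidesteps it entirely by never rescaling: it works with the fixed domain $\Omega$ and cutoffs, which is a simpler bookkeeping than the family of normalized domains you propose.
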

\begin{proof}
Assume that  $\vec f \in C_c^\infty(\Omega)^m$ and $\vec g \in C^\infty(\partial\Omega)^m$ satisfy the compatibility condition $\int_\Omega \vec f + \int_{\partial\Omega} \vec g =0$ and let $\vec u \in \tilde{W}^{1,2}(\Omega)^m$ be a unique weak solution of the problem
\[
\left\{
\begin{aligned}
L\vec u&=\vec f \;\text{ in }\;\Omega,\\
\vec A D\vec u \cdot \vec n&=  \vec g \; \text{ on }\;\partial\Omega.
\end{aligned}
\right. 
\]
Let $\vec v=\zeta \vec u$, where $\zeta:\bR^d \to\bR$ is a smooth function to be chosen later, and observe that $\vec v$ is a weak solution in $W^{1,2}(\Omega)^m$ of the problem
\[
\left\{
\begin{aligned}
-D_\alpha(A^{\alpha\beta}_{ij} D_\beta v^j)&= \zeta f^i-\Psi^i-D_\alpha F^i_\alpha \;\text{ in }\;\Omega,\\
A^{\alpha\beta}_{ij} D_\beta v^j n_\alpha &= \zeta g^i+F^i_\alpha n_\alpha \; \text{ on }\;\partial\Omega,
\end{aligned}
\right. 
\]
where we used the notation
\[
\Psi^i=A^{\alpha\beta}_{ij} D_\alpha \eta D_\beta u^j ,\quad F^i_\alpha=A^{\alpha\beta}_{ij} D_\beta \zeta u^j.
\]
For $i=1,\ldots, m$, let $w^i$ be a solution of the Neumann problem
\[
\left\{
\begin{aligned}
-\Delta w^i&= \zeta f^i-\Psi^i \quad\text{in }\;\Omega,\\
\partial w^i/\partial n &=  \zeta g^i \quad \text{on }\;\partial\Omega.
\end{aligned}
\right.
\]
Then, by \cite[Corollary~9.3]{FMM} together with the embedding theorems of Sobolev and Besov spaces (see e.g., \cite{BL}), we have the following estimate for $D w^i$ provided $p>d/(d-1)$:
\[
\norm{D \vec w}_{L^p(\Omega)} \leq C \left(\norm{\zeta \vec f}_{L^{pd/(p+d)}(\Omega)}+\norm{\vec \Psi}_{L^{pd/(p+d)}(\Omega)}+\norm{\zeta \vec g}_{L^{p(d-1)/d}(\partial\Omega)}\right).
\]
Notice that if we set $h^i_\alpha=D_\alpha w^i+F^i_\alpha$, then $\vec v$ becomes a weak solution of the problem
\[
\left\{
\begin{aligned}
D_\alpha(A^{\alpha\beta}_{ij} D_\beta v^j)&= D_\alpha h^i_\alpha \;\text{ in }\;\Omega\\
(A^{\alpha\beta}_{ij} D_\beta v^j -h^i_\alpha) n_\alpha &=  0 \; \text{ on }\;\partial\Omega.
\end{aligned}
\right.
\]
We then apply \cite[Theorem~1]{BCKW} to conclude that $\vec v \in W^{1,p}(\Omega)^m$ with the estimate
\begin{equation}					\label{eq6.4ao}
\norm{D \vec v}_{L^p(\Omega)} \le C \left(\norm{\zeta \vec f}_{L^{pd/(p+d)}(\Omega)}+\norm{\vec \Psi}_{L^{pd/(p+d)}(\Omega)}+\norm{\zeta \vec g}_{L^{p(d-1)/d}(\partial\Omega)}+\norm{\vec F}_{L^p(\Omega)}\right).
\end{equation}
By choosing $\zeta\equiv 1$, we find that $\vec u \in W^{1,p}(\Omega)^m$ and 
\[
\norm{D \vec u}_{L^p(\Omega)} \le C \left(\norm{\vec f}_{L^{pd/(p+d)}(\Omega)}+\norm{\vec g}_{L^{p(d-1)/d}(\partial\Omega)}\right),
\]
and thus, via Morrey's imbedding theorem, we find that $\vec u \in C^\mu(\overline \Omega)$ for any $\mu\in (0,1)$, which particularly implies that $\vec u$ is globally bounded in $\Omega$.

To obtain \eqref{LB}, we employ the standard localization method as follows.
Let $x\in\Omega$ and $0<R<\diam(\Omega)$ be arbitrary but fixed.
For any $y\in \Omega\cap B_R(x)$ and $0<\rho<r \le R$, we choose the function $\zeta$ such that
\[
0\le \zeta \le 1,\quad \supp \zeta \subset B_r(y),\quad \zeta\equiv 1\,\text{ on }\, B_\rho(y),\quad\text{and}\quad\abs{D \zeta} \le 2/(r-\rho).
\]
Recall that we use the notation
\[
\Omega_r=\Omega_r(y)=\Omega\cap B_r(y),\quad \Sigma_r=\Sigma_r(y)=\partial\Omega\cap B_r(y).
\]
Then by using the assumptions on $\zeta$, we estimate terms in \eqref{eq6.4ao} as follows.
\begin{align*}
\norm{\zeta \vec f}_{L^{pd/(p+d)}(\Omega)} &\le C r^{1+d/p}\norm{\vec f}_{L^\infty(\Omega_r)},\\
\norm{\vec \Psi}_{L^{pd/(p+d)}(\Omega)} &\le C(r-\rho)^{-1}\norm{D\vec u}_{L^{pd/(p+d)}(\Omega_r)}, \\
\norm{\zeta \vec g}_{L^{p(d-1)/d}(\Omega)} &\le C r^{d/p}\norm{\vec g}_{L^\infty(\Sigma_r)}, \\
\norm{\vec F}_{L^p(\Omega)} &\le C(r-\rho)^{-1}\norm{\vec u}_{L^p(\Omega_r)}.
\end{align*}
By using the inequality \eqref{eq6.4ao} and the above estimates, we get
\begin{multline}				\label{eq6.4he}
\norm{D \vec u}_{L^p(\Omega_\rho)} \le C r^{1+d/p}\norm{\vec f}_{L^\infty(\Omega_r)}+C r^{d/p} \norm{\vec g}_{L^\infty(\Sigma_r)}\\
+C(r-\rho)^{-1}\norm{\vec u}_{L^p(\Omega_r)}+C(r-\rho)^{-1} \norm{D\vec u}_{L^{pd/(p+d)}(\Omega_r)}.
\end{multline}
We fix $p>d$ and let $k$ be the smallest integer such that $k\ge d(1/2-1/p)$.
We set
\[
p_i=pd/(d+p i)\quad\text{and}\quad r_i=\rho+(r-\rho)i/k,\quad i=0,\ldots,k.
\]
Then we apply \eqref{eq6.4he} iteratively to get
\begin{multline*}
\norm{D \vec u}_{L^p(\Omega_\rho)}
\le \sum_{i=1}^k C^i\left(\frac{k}{r-\rho}\right)^{i-1}\left( r_i^{1+d/p_{i-1}} \norm{\vec f}_{L^\infty(\Omega_{r_i})}+ r_i^{d/p_{i-1}} \norm{\vec g}_{L^\infty(\Sigma_{r_i})}\right) \\
+\sum_{i=1}^k C^i\left(\frac{k}{r-\rho}\right)^i \norm{\vec u}_{L^{p_{i-1}}(\Omega_{r_i})}+C^k\left(\frac{k}{r-\rho}\right)^k\norm{D\vec u}_{L^{p_k}(\Omega_{r_k})}.
\end{multline*}
Notice that $1<p_k\le 2$.
By using H\"older's inequality we then obtain
\begin{multline*}
\rho^{-d(1/2-1/p)}\norm{D \vec u}_{L^2(\Omega_\rho)} \le C\left(\frac{r}{r-\rho}\right)^{k-1} \left(r^{1+d/p}\norm{\vec f}_{L^\infty(\Omega_r)}+r^{d/p} \norm{\vec g}_{L^\infty(\Sigma_r)}\right)\\
+C \left(\frac{r}{r-\rho}\right)^k r^{-1} \norm{\vec u}_{L^p(\Omega_r)}+ C\left(\frac{r}{r-\rho}\right)^k r^{d(1/p-1/2)} \norm{D\vec u}_{L^2(\Omega_r)}.
\end{multline*}
If we take $r=R/4$ and $\rho<r/2=R/4$ in the above, then for all $y\in \Omega_{R/4}(x)$, we get
\begin{multline}					\label{eq6.5tt}
\left(\rho^{-(d-2+2(1-d/p))}\int_{\Omega_\rho(y)}\abs{D\vec u}^2\right)^{1/2}
 \le  C R^{1+d/p}\norm{\vec f}_{L^\infty(\Omega_R(x))}+CR^{d/p} \norm{\vec g}_{L^\infty(\Sigma_R(x))}\\
+C R^{-1} \norm{\vec u}_{L^p(\Omega_R(x))}+CR^{d(1/p-1/2)} \norm{D\vec u}_{L^2(\Omega_{R/2}(x))}=:A(R).
\end{multline}
Hereafter in the proof, we shall denote $\Omega_R=\Omega_R(x)$.
Then by Morrey-Campanato's theorem (see \cite[Section~3.1]{Gi93}), for all $z, z' \in \Omega_{R/4}$, we have
\[
\abs{\vec u(z)-\vec u(z')} \le C R^{1-d/p} A(R),
\]
where $A(R)$ is as defined in \eqref{eq6.5tt}.
Therefore, for any $z\in \Omega_{R/4}$ we have
\[
\abs{\vec u(z)} \le \abs{\vec u(z')}+ \abs{\vec u(z)-\vec u(z')} \le \abs{\vec u(z')} + C R^{1-d/p} A(R),\quad \forall z'\in \Omega_{R/4}.
\]
By taking average over $z'\in \Omega_{R/4}$ in the above and using the definition of $A(R)$, we obtain
\begin{multline*}
\sup_{\Omega_{R/4}}\,\abs{\vec u} 
\le \fint_{\Omega_{R/4}} \abs{\vec u(z')}\,dz'+ C R^2\norm{\vec f}_{L^\infty(\Omega_R)}+CR \norm{\vec g}_{L^\infty(\Sigma_R)}\\
+C R^{-d/p} \norm{\vec u}_{L^p(\Omega_R)}+CR^{1-d/2} \norm{D\vec u}_{L^2(\Omega_{R/2})}.
\end{multline*}
Then by using H\"older's inequality, Caccioppoli's inequality (see Lemma~\ref{lem:a2} below), and the fact that $\Omega$ is bounded, we get
\[
\sup_{\Omega_{R/4}}\,\abs{\vec u} 
\le C R^2\norm{\vec f}_{L^\infty(\Omega_R)}+CR \norm{\vec g}_{L^\infty(\Sigma_R)}+C R^{-d/p} \norm{\vec u}_{L^p(\Omega_R)}+CR^{-d/2} \norm{\vec u}_{L^2(\Omega_R)}.
\]
By using a standard argument (see \cite[pp. 80--82]{Gi93}), we derive from the above inequality 
\[
\sup_{\Omega_{R/2}}\, \abs{\vec u} \le
CR^2\norm{\vec f}_{L^\infty(\Omega_R)}+CR \norm{\vec g}_{L^\infty(\Sigma_R)}+CR^{-d/2} \norm{\vec u}_{L^2(\Omega_R)}.
\]
The proof is complete.
\end{proof}

\begin{lemma}				\label{lem:a2}
Let $\Omega \subset \bR^d$ be a bounded Lipschitz domain.
Let $\vec u\in W^{1,2}(\Omega)^m$ be a weak solution of the problem
\[
\left\{
\begin{aligned}
L\vec u&=\vec f \quad\text{in }\;\Omega,\\
\vec A D\vec u \cdot \vec n&= \vec g \quad \text{on }\;\partial\Omega,
\end{aligned}
\right.
\]
where $\vec f\in L^\infty(\Omega)^m$ and $\vec g \in L^\infty(\partial\Omega)^m$.
Then we have
\begin{equation}					\label{eq6.1ap}
 \norm{D\vec u}_{L^2(\Omega_{R/2})} \le C R^{-1}\norm{\vec u}_{L^2(\Omega_R)}+CR^{d/2}(1+R)\norm{\vec g}_{L^\infty(\Sigma_R)}+CR^{d/2+1}\norm{\vec f}_{L^\infty(\Omega_R)},
\end{equation}
where $C=C(d,m,\lambda, M,\Omega)$.
\end{lemma}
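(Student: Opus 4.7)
The plan is to establish \eqref{eq6.1ap} by the classical Caccioppoli technique, namely by testing the weak formulation against a carefully-chosen cutoff multiple of $\vec u$. Fix a smooth function $\eta$ supported in $B_R(x)$ with $\eta \equiv 1$ on $B_{R/2}(x)$, $0\le \eta \le 1$, and $|D\eta| \le C/R$. Then $\vec w := \eta^2 \vec u$ lies in $W^{1,2}(\Omega)^m$, so it is an admissible test function in
\[
\int_\Omega A^{\alpha\beta}_{ij} D_\beta u^j D_\alpha w^i = \int_\Omega f^i w^i + \int_{\partial\Omega} g^i w^i.
\]
Expanding $D_\alpha(\eta^2 u^i) = \eta^2 D_\alpha u^i + 2\eta \, D_\alpha \eta \, u^i$, using the ellipticity \eqref{eqP-02} on the diagonal term, the boundedness \eqref{eqP-03} on the cross term, and Cauchy's inequality with $\epsilon$ to absorb $\eta |D\vec u|$, I would obtain
\[
\lambda \int_\Omega \eta^2 |D\vec u|^2 \le C \int_\Omega |D\eta|^2 |\vec u|^2 + \Bigabs{\int_\Omega f^i \eta^2 u^i} + \Bigabs{\int_{\partial\Omega} g^i \eta^2 u^i}.
\]

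The first term on the right is immediately bounded by $CR^{-2}\|\vec u\|_{L^2(\Omega_R)}^2$. For the interior source, H\"older's inequality applied on $\Omega_R$ gives
\[
\Bigabs{\int_\Omega f^i \eta^2 u^i} \le \|\vec f\|_{L^\infty(\Omega_R)} |\Omega_R|^{1/2} \|\vec u\|_{L^2(\Omega_R)} \le C R^{d/2} \|\vec f\|_{L^\infty(\Omega_R)} \|\vec u\|_{L^2(\Omega_R)},
\]
and I would then apply Young's inequality in the form $ab \le \tfrac12(R^{d+2}\|\vec f\|_\infty^2 + R^{-2}\|\vec u\|^2)$ to obtain the desired $R^{d/2+1}\|\vec f\|_\infty$ coefficient after taking a square root. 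The main obstacle is the boundary term, where one must exploit the Lipschitz character of $\partial \Omega$. For this I would use the scale-invariant trace inequality for functions supported in $B_R(x)$,
\[
\|\vec v\|_{L^2(\Sigma_R)}^2 \le C\bigl(R^{-1}\|\vec v\|_{L^2(\Omega_R)}^2 + R \|D\vec v\|_{L^2(\Omega_R)}^2\bigr),
\]
valid with $C = C(d,\Omega)$, applied to $\vec v = \eta \vec u$. Combined with $|D(\eta\vec u)| \le |D\eta||\vec u| + \eta |D\vec u|$ and $|D\eta|\le C/R$, this yields
\[
\|\eta \vec u\|_{L^2(\Sigma_R)} \le C R^{-1/2}\|\vec u\|_{L^2(\Omega_R)} + C R^{1/2}\|\eta D\vec u\|_{L^2(\Omega_R)}.
\]

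Then H\"older's inequality on $\Sigma_R$ and $\eta\le 1$ give
\[
\Bigabs{\int_{\partial\Omega} g^i \eta^2 u^i} \le \|\vec g\|_{L^\infty(\Sigma_R)} |\Sigma_R|^{1/2} \|\eta\vec u\|_{L^2(\Sigma_R)} \le C\|\vec g\|_{L^\infty(\Sigma_R)} \bigl(R^{(d-2)/2}\|\vec u\|_{L^2(\Omega_R)} + R^{d/2}\|\eta D\vec u\|_{L^2(\Omega_R)}\bigr).
\]
Using Young's inequality with $\epsilon$ on the second piece to absorb the $\|\eta D\vec u\|_{L^2}^2$ contribution into the left side of the main inequality, and on the first piece to absorb $R^{-2}\|\vec u\|^2$, produces a coefficient of $\|\vec g\|_\infty^2$ of order $R^d$ (or more generously $R^d(1+R)^2$, which is what the statement accommodates). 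Collecting estimates and taking a square root after absorbing the small-$\epsilon$ terms yields
\[
\|D\vec u\|_{L^2(\Omega_{R/2})} \le \|\eta D\vec u\|_{L^2(\Omega_R)} \le C R^{-1}\|\vec u\|_{L^2(\Omega_R)} + C R^{d/2}(1+R)\|\vec g\|_{L^\infty(\Sigma_R)} + C R^{d/2+1}\|\vec f\|_{L^\infty(\Omega_R)},
\]
as claimed. The only genuinely domain-sensitive step is the trace inequality above; everything else is bookkeeping with Young's inequality at the correct scales.
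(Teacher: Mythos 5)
Your argument is correct and follows essentially the same route as the paper: test the weak formulation with $\eta^2\vec u$, use ellipticity, boundedness and Cauchy's inequality, handle the interior source by H\"older and Young at scale $R$, and control the boundary term through a trace inequality for $\eta\vec u$. The only (minor) difference is that you invoke a scale-invariant $L^2$ trace estimate on $\Omega_R$ (standard on a bounded Lipschitz domain via boundary charts, with constant depending on $\Omega$), whereas the paper uses the global $W^{1,1}(\Omega)\to L^1(\partial\Omega)$ trace theorem; this is why the paper's bound carries the harmless factor $(1+R)$ in front of $\norm{\vec g}_{L^\infty(\Sigma_R)}$, which your slightly sharper estimate does not need.
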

\begin{proof}
Let  $\eta$ be a smooth function on $\bR^d$ satisfying
\[
0\le \eta\le 1,\quad \supp \eta \subset B_R,\quad \eta\equiv 1\,\text{ on }\, B_{R/2},\quad\text{and}\quad \abs{D\eta} \le 4/R.
\]
By setting $\vec v=\eta^2 \vec u$ in \eqref{eq2.6ux}, we obtain
\[
 \int_\Omega \eta^2 A^{\alpha\beta}_{ij} D_\beta u^j D_\alpha u^i = -\int_\Omega 2\eta A^{\alpha\beta}_{ij} D_\beta \eta u^j D_\alpha u^i + \int_{\partial\Omega} \eta^2 g^i u^i + \int_\Omega \eta^2 f^i u^i.
\]
Then by \eqref{eqP-02}, \eqref{eqP-03}, and Cauchy's inequality, we get
\begin{equation}					\label{eq6.2ap}
\int_\Omega \eta^2 \abs{ D \vec u}^2 \le  C \int_\Omega\abs{D \eta}^2 \abs{\vec u}^2+C \norm{\vec g}_{L^\infty(\Sigma_R)} \norm{\eta\vec u}_{L^1(\partial\Omega)}+C\norm{\vec f}_{L^\infty(\Omega_R)}\norm{\vec u}_{L^1(\Omega_R)}.
\end{equation}
Observe that the trace theorem (see e.g., \cite{EG}) yields
\[
\int_{\partial\Omega} \abs{\eta \vec u} \le C\int_\Omega \abs{D(\eta \vec u)}+\abs{\eta \vec u} \le C\int_\Omega\abs{D\eta}\abs{\vec u}+\eta \abs{D\vec u}+\abs{\eta \vec u}.
\]
Therefore, by using H\"older's inequality and Cauchy's inequality, we estimate
\begin{align*}
 \norm{\vec g}_{L^\infty(\Sigma_R)}\norm{\eta\vec u}_{L^1(\partial\Omega)} &\le C\norm{\vec g}_{L^\infty(\Sigma_R)}\left( R^{d/2-1}\norm{\vec u}_{L^2(\Omega_R)}+ CR^{d/2}\norm{\eta D\vec u}_{L^2(\Omega_R)}+CR^{d/2}\norm{\vec u}_{L^2(\Omega_R)}\right)\\
 &\le CR^d(1+\epsilon^{-1}+R^2)\norm{\vec g}_{L^\infty(\Sigma_R)}^2+CR^{-2}\norm{\vec u}_{L^2(\Omega_R)}^2+\epsilon\int_\Omega \eta^2 \abs{ D \vec u}^2.
 \end{align*}
Similarly, by H\"older's inequality and Cauchy's inequality, we obtain
\[
\norm{\vec f}_{L^\infty(\Omega_R)}\norm{\vec u}_{L^1(\Omega_R)}\le CR^{d/2}\norm{\vec f}_{L^\infty(\Omega_R)}\norm{\vec u}_{L^2(\Omega_R)} \le CR^{d+2}\norm{\vec f}_{L^\infty(\Omega_R)}^2+ CR^{-2}\norm{\vec u}_{L^2(\Omega_R)}^2.
\]
 By combining \eqref{eq6.2ap} and the above inequality, we get
 \[
 \norm{D\vec u}_{L^2(\Omega_{R/2})}^2 \le C R^{-2}\norm{\vec u}_{L^2(\Omega_R)}^2+CR^d(1+R^2)\norm{\vec g}_{L^\infty(\Sigma_R)}^2+CR^{d+2}\norm{\vec f}_{L^\infty(\Omega_R)}^2.
 \]
 The above inequality obviously yields \eqref{eq6.1ap}.
\end{proof}

\begin{lemma}			\label{lem6.8ap}
Let $\Omega\subset\bR^d$ be a Lipschitz graph domain with Lipschitz constant $K$.
Then, for any $u \in Y^{1,2}(\Omega)$, we have
\begin{equation}			\label{eq05ap}
\norm{u}_{L^{2d/(d-2)}(\Omega)} \le C(d,K) \norm{D u}_{L^2(\Omega)}.
\end{equation}
\end{lemma}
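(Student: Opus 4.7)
The plan is to reduce \eqref{eq05ap} to the classical Sobolev inequality on $\bR^d$ by flattening the boundary of $\Omega$ with a bi-Lipschitz change of variables and then performing an even reflection across the resulting flat boundary. Define
\begin{equation*}
T:\bR^d \to \bR^d,\qquad T(x',x_d) := (x', x_d + \varphi(x')),
\end{equation*}
and note that, since $\varphi$ is Lipschitz with constant $K$, the map $T$ is bi-Lipschitz with Lipschitz constants depending only on $K$, has Jacobian determinant identically $1$ almost everywhere, and carries $\bR^d_+ := \{x_d > 0\}$ onto $\Omega$.

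Given $u \in Y^{1,2}(\Omega)$, set $v := u \circ T$ on $\bR^d_+$. By the chain rule for Sobolev functions under bi-Lipschitz changes of variables (valid because $T$ has constant Jacobian), $v$ lies in $Y^{1,2}(\bR^d_+)$ and satisfies
\begin{equation*}
\norm{v}_{L^{2d/(d-2)}(\bR^d_+)} = \norm{u}_{L^{2d/(d-2)}(\Omega)}, \qquad \norm{Dv}_{L^2(\bR^d_+)} \le C(K) \norm{Du}_{L^2(\Omega)}.
\end{equation*}
Next I would extend $v$ to all of $\bR^d$ by even reflection, $\tilde v(x',x_d) := v(x',\abs{x_d})$. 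A standard mollification argument shows that $\tilde v$ has weak derivatives in $L^2(\bR^d)$ with $\norm{D\tilde v}_{L^2(\bR^d)}^2 = 2\norm{Dv}_{L^2(\bR^d_+)}^2$, while $\norm{\tilde v}_{L^{2d/(d-2)}(\bR^d)}^{2d/(d-2)} = 2\norm{v}_{L^{2d/(d-2)}(\bR^d_+)}^{2d/(d-2)}$; in particular $\tilde v \in Y^{1,2}(\bR^d)$.

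Finally, apply the classical Gagliardo-Nirenberg-Sobolev inequality $\norm{w}_{L^{2d/(d-2)}(\bR^d)} \le C(d) \norm{Dw}_{L^2(\bR^d)}$ (justified on $Y^{1,2}(\bR^d)$ by density of $C_c^\infty(\bR^d)$) to $w = \tilde v$; restricting to $\bR^d_+$ and undoing the identifications above yields \eqref{eq05ap} with $C = C(d,K)$. The only subtle point is verifying that $u \circ T$ lies in $Y^{1,2}(\bR^d_+)$ with the stated norm bounds and that the even reflection belongs to $Y^{1,2}(\bR^d)$; both facts are standard consequences of $T$ being bi-Lipschitz and volume-preserving, but if one prefers to avoid invoking them directly, they can be proved by first approximating $u$ by functions smooth up to the boundary of $\Omega$ (which is possible since $\partial\Omega$ is Lipschitz) and passing to the limit in the corresponding inequalities for smooth approximations.
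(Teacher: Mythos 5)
Your proposal is correct and is essentially the paper's argument: the paper also reduces to the Sobolev inequality on $Y^{1,2}(\bR^d)$ (justified by density of $C_c^\infty$) via an extension by reflection, defining $u^-(y)=u(y',2\varphi(y')-y_d)$ outside $\Omega$, which is exactly your flattening map $T$ conjugated with even reflection across $\{x_d=0\}$. The only presentational difference is that the paper reflects directly across the graph (and first proves density of $C_c^\infty(\overline\Omega)$ in $Y^{1,2}(\Omega)$ to justify the extension on all of $Y^{1,2}(\Omega)$), rather than changing variables to the half-space.
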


\begin{proof}
We begin with showing that $C_c^\infty(\overline \Omega)$ is dense in $Y^{1,2}(\Omega)$.
By following the same steps as in the proof of approximation theorem for Sobolev functions (see, e.g., \cite{EG}), we find that $C^{\infty}(\overline \Omega)\cap Y^{1,2}(\Omega)$ is dense in $Y^{1,2}(\Omega)$.
On the other hand, one can approximate $u\in  C^{\infty}(\overline\Omega)\cap Y^{1,2}(\Omega)$ by a sequence of functions from $C_c^\infty(\overline\Omega)$ in the $Y^{1,2}(\Omega)$ norm as follows.
For each $k=1,2,\ldots$, let $\phi_k\in C_c^\infty(\bR^d)$ be such that
\[
0\le \phi_k \le 1,\quad \phi_k=1\;\text{ on }\;B_k(0),\quad \supp \phi_k\subset B_{3k}(0),\quad \text{and }\; \abs{\nabla\phi_k}\le 1/k.
\]
Then, obviously $u \phi_k\in C_c^\infty(\overline{\Omega})$ and it is easy to check $\norm{u \phi_k- u}_{Y^{1,2}(\Omega)} \to 0$ as $k \to \infty$.
We have thus shown that $C_c^\infty(\overline \Omega)$ is dense in $Y^{1,2}(\Omega)$.
By essentially the same argument, we also find that $C^\infty_c(\bR^d)$ is dense in $Y^{1,2}(\bR^d)$.
Therefore, the Sobolev inequality yields that
\begin{equation}					\label{eq5.1ys}
\norm{u}_{L^{2d/(d-2)}(\bR^d)} \le C(d) \norm{D u}_{L^2(\bR^d)},\quad \forall u\in Y^{1,2}(\bR^d).
\end{equation}
Next, we claim that there exists a bounded linear operator $E: Y^{1,2}(\Omega)\to Y^{1,2}(\bR^d)$
such that $Eu=u$ in $\Omega$ and 
\begin{equation}			\label{eq06ap}
\norm{D(Eu)}_{L^2(\bR^d)} \le C(d,K) \norm{Du}_{L^2(\Omega)}.
\end{equation}
To prove \eqref{eq06ap}, we follow the same steps in the usual proof of extension theorem for Sobolev functions in Lipschitz domain (see, e.g., \cite{EG}).
For $u\in C^{\infty}_c(\overline\Omega)$, set
\begin{align*}
u^+(y)&= u(y)\;\text{ if }\; y\in \overline \Omega,\\
u^-(y)&= u(y',2\gamma(y')-y_d)\;\text{ if }\; y\in \bR^d\setminus\Omega.
\end{align*}
Note $u^-=u^+=u$ on $\partial\Omega$.
Then, it is routine to check (see \cite[Section~4.1]{EG})
\[
\norm{D u^-}_{L^2(\bR^d\setminus\overline\Omega)} \le C(K) \norm{D u}_{L^2(\Omega)},
\]
and thus, we have
\[
\norm{u^-}_{Y^{1,2}(\bR^d\setminus\overline\Omega)} \le C(K) \norm{u}_{Y^{1,2}(\Omega)}.
\]
Define
\[
Eu\equiv\bar u\equiv
\left\{
\begin{aligned}
\;u^+&\quad\text{on}\;\overline\Omega,\\
\;u^-&\quad \text{on }\;\bR^d\setminus\Omega,
\end{aligned}
\right.
\]
and note that $\bar u$ is continuous on $\bR^d$.
Also, it is easy to see $\bar u \in Y^{1,2}(\bR^d)$ and 
\[
D\bar u=
\left\{
\begin{aligned}
\;Du^+&\quad\text{on}\;\overline\Omega,\\
\;Du^-&\quad \text{on }\;\bR^d\setminus\Omega.
\end{aligned}
\right.
\]
Therefore, we have proved \eqref{eq06ap} in the case when $u\in C_c^\infty(\overline\Omega)$.
Since $C_c^\infty(\overline\Omega)$ is dense in $Y^{1,2}(\Omega)$, we obtain \eqref{eq06ap} by the standard approximation argument.
Finally, we obtain \eqref{eq05ap} by combining \eqref{eq5.1ys} and \eqref{eq06ap}.
\end{proof}


\end{document}